\documentclass[a4, 12pt, 
]{amsart}

\usepackage[dvipdfmx]{graphicx,xcolor}
\usepackage[dvipdfmx]{hyperref}
\usepackage{amsmath}
\usepackage{amssymb}
\usepackage[dvipdfmx]{hyperref}
\usepackage{hyperref}
\hypersetup{
	colorlinks=true,
	linkcolor=red,
	citecolor=blue}
	\usepackage[utf8]{inputenc}
\usepackage[T1]{fontenc}
\usepackage{color}
\usepackage{xcolor} 
\usepackage{ulem}
\usepackage{calligra}
\usepackage{mathrsfs}
\usepackage[all]{xy}

\usepackage{comment}
\usepackage{todonotes}

\newtheorem{theo}{Theorem}[section]

\newtheorem{lemm}[theo]{Lemma}
\newtheorem{cor}[theo]{Corollary}
\newtheorem{claim}[theo]{Claim}

\numberwithin{equation}{section}

\theoremstyle{definition}
\newtheorem{defi}[theo]{Definition}

\newtheorem{setup}[theo]{Setting}
\newtheorem{step}{Step}

\theoremstyle{remark}
\newtheorem{rem}[theo]{Remark}

\newcommand{\Ker}[0]{\operatorname{Ker}}

\newcommand{\rank}[0]{\operatorname{rank}}

\newcommand{\Nak}{\mathrm{Nak.}}

\newcommand{\Id}{\mathrm{Id}}
\newcommand{\st}{\mathrm{st}}

\newcommand{\deldel}{\sqrt{-1}\partial \overline{\partial}}
\newcommand{\Dbar }{\overline{\partial}}
\newcommand{\e}{\varepsilon}
\newcommand{\ai}{\sqrt{-1}}
\newcommand{\I}{\mathcal{I}}

\newcommand{\C}{\mathbb{C}}

\newcommand{\glo}{\mathrm{glo.}}
\newcommand{\loc}{\mathrm{loc.}}

\newcommand{\cal}[1]{\mathcal{#1}}

\newcommand{\U}[1]{{U}_{i_{0}...i_{#1}}}

\newcommand{\idd}{\sqrt{-1}\partial\overline{\partial}}

\usepackage{geometry}
\geometry{left=35mm,right=35mm,top=43mm,bottom=43mm}

\begin{document}

\title[Singular Nakano positivity of direct image sheaves]
{Singular Nakano positivity \\ of direct image sheaves of adjoint bundles}

\author{Takahiro INAYAMA}
\address{Department of Mathematics, 
Faculty of Science and Technology, 
Tokyo University of Science, 
2641 Yamazaki, Noda, 
Chiba, 278-8510, 
Japan.
}
\email{{\tt inayama\_takahiro@rs.tus.ac.jp}, {\tt inayama570@gmail.com}} %(T.\,INAYAMA)}
%\email{{\tt inayama570@gmail.com}}

\author{Shin-ichi MATSUMURA}
\address{Mathematical Institute, Tohoku University, 
6-3, Aramaki Aza-Aoba, Aoba-ku, Sendai 980-8578, Japan.}
\email{{\tt mshinichi-math@tohoku.ac.jp}, {\tt mshinichi0@gmail.com}} %(S.\,MATSUMURA)}
%\email{{\tt mshinichi0@gmail.com}}

\author{Yuta WATANABE$^\ast$}
\address{Department of Mathematics, Faculty of Science and Engineering, Chuo University.
1-13-27 Kasuga, Bunkyo-ku, Tokyo 112-8551, Japan}
\email{{\tt wyuta@math.chuo-u.ac.jp}, {\tt wyuta.math@gamil.com}} %(Y.\,WATANABE)}
%\email{{\tt wyuta.math@gamil.com}}
%\thanks{$^\ast$\,Corresponding author} %: Yuta WATANABE, wyuta.math@gamil.com}

\date{\today, version 0.02}

\renewcommand{\subjclassname}{%
\textup{2020} Mathematics Subject Classification}
\subjclass[2020]{Primary 32U05, Secondary 32A70, 32L20.}

\keywords
{Singular Hermitian metrics, 
Vector bundles, 
Nakano positivity, 
Direct image sheaves, 
$\Dbar$-equations, 
optimal $L^{2}$-estimates.}

\maketitle

\begin{abstract}
In this paper, we consider a proper K\"ahler fibration $f \colon X \to Y$ 
and a singular Hermitian line bundle $(L, h)$ on $X$ with semi-positive curvature. 
We prove that the canonical $L^2$-metric on the direct image sheaf $f_{*}(\mathcal{O}_{X}(K_{X/Y}+L) \otimes \I(h))$ is singular Nakano semi-positive in the sense that the $\Dbar$-equation can be solved with optimal $L^{2}$-estimate. 
Our proof does not rely on the theory of Griffiths positivity for the direct image sheaf.
\end{abstract}

\tableofcontents

\section{Introduction}\label{Sec-1}

The theory of positivity of direct image sheaves of adjoint bundles originates 
from Griffiths' work  \cite{Gri70}, 
which is based on the theory of variation of Hodge structures.
This theory has been further developed in subsequent works \cite{Fuj78, Kaw81, Vie83} 
and other subsequent contributions from the viewpoint of algebraic geometry.
On the other hand, the analytic theory of the positivity of direct image sheaves 
was initiated by the theory of variation of Bergman kernels by \cite{MY04} and further advanced in \cite{Ber09}.
This analytic approach has been extended by \cite{HPS18, PT18} 
to establish the theory of \textit{singular Griffiths positivity} of direct image sheaves 
as a more applicable framework.
The main result of this paper (see Theorem \ref{thm-main} below)
establishes \textit{singular Nakano positivity} of direct image sheaves
in the same framework as in \cite{HPS18, PT18}, and further generalizes 
the previous works  \cite[Theorem 1.2]{Wat23} and \cite[Theorem 1.5]{DNWZ23} 
on singular Nakano positivity. 
Here, we adopt the definition of singular Nakano positivity based on \cite{DNWZ23, Ina22},
which uses solutions of the $\Dbar$-equation with optimal $L^{2}$-estimates of H\"ormander type
(see Definition \ref{def-dbarNak} for the precise definition).

\begin{theo}\label{thm-main}
Let $f \colon X \to Y$ be a proper K\"ahler fibration between complex manifolds $X$ and $Y$ 
$($i.e.,\,a proper surjective morphism with connected fibers such that 
the inverse image $f^{-1}(U_{Y})$ is a K\"ahler manifold 
for a sufficiently small open set $U_{Y}\subset Y$$)$. 
Let $(L, h)$ be a singular Hermitian line bundle on $X$ such that 
$\sqrt{-1}\Theta_{h} \geq f^{*}\theta$ holds on $Y$ in the sense of currents, 
where $\theta$ is a continuous $(1,1)$-form on $Y$. 
Then, we have the following$:$
\smallskip

$(1)$
The induced canonical $L^2$-metric $H$ on the direct image sheaf $f_{*}(\mathcal{O}_{X}(K_{X/Y} +L) \otimes \I(h))$ 
satisfies that 
$$
\ai\Theta_H \geq^{L^2}_{\loc \Nak} \theta \otimes \Id \text{ on } Y \text{ in the sense of Definition } \ref{def-dbarNak-sheaf}.
$$

$(2)$ If we further assume that $X$ is K\"ahler, then the canonical $L^2$-metric $H$ satisfies that 
$$
\ai\Theta_H \geq^{L^2}_{\glo \Nak} \theta \otimes \Id \text{ on } Y \text{ in the sense of Definition } \ref{def-dbarNak-sheaf}. 
$$
Here $K_{X/Y}$ is the relative canonical bundle and $\mathcal{I}(h)$ is the multiplier ideal  sheaf 
associated with $h$. 
\end{theo}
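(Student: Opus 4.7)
The plan is to reduce the Nakano-type $\Dbar$-estimate on $Y$ to an optimal $L^{2}$-estimate on $X$ for $(L, h)$, via the familiar \emph{lift--solve--push} strategy. The distinctive feature here is that, unlike the analogous proofs of Griffiths positivity in \cite{HPS18, PT18}, one must compare the full Nakano curvature operators on $Y$ and on $X$, not merely pointwise Griffiths inner products. Fix a bounded Stein coordinate chart $V \subset Y$ with K\"ahler form $\omega_{Y}$ and a smooth positive $(1,1)$-form $\beta$ satisfying $\beta \geq \theta$ such that $B := [\beta \otimes \Id, \Lambda_{\omega_{Y}}]$ is strictly positive on $F$-valued $(n_{Y}, 1)$-forms, where $F := f_{*}(\mathcal{O}_{X}(K_{X/Y}+L) \otimes \I(h))$ and $n_Y := \dim Y$. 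Possibly shrinking $V$, write $\beta - \theta = \idd \psi$ with $\psi$ smooth and bounded on $V$, and set $h' := h\, e^{-f^{*}\psi}$, so that $\ai\Theta_{h'} \geq f^{*}\beta$ on $f^{-1}(V)$. The preimage $f^{-1}(V)$ is K\"ahler (by hypothesis in $(1)$, automatically in $(2)$) and admits a complete K\"ahler metric since $V$ is Stein and $f$ is proper.

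Given a $\Dbar$-closed section $u$ of $\Omega_{V}^{n_{Y},1} \otimes F$ with $\int_V \langle B^{-1}u, u\rangle_{\omega_{Y},H}\,dV_{\omega_Y} < \infty$, the natural wedge isomorphism between $F$ and the sheaf of fiberwise $L^{2}$ holomorphic $(n_{F},0)$-forms with values in $L \otimes \I(h)$ (with $n_F := \dim X - \dim Y$) yields a $\Dbar$-closed $L$-valued $(n_{X},1)$-form $\tilde u$ on $f^{-1}(V)$ of maximal vertical degree. Equipping $f^{-1}(V)$ with the family $\omega_{X,\varepsilon} := f^{*}\omega_{Y} + \varepsilon\, \omega_{X}$ for an auxiliary K\"ahler form $\omega_X$ and $\varepsilon > 0$, I would carry out a pointwise linear-algebra calculation, specific to $L$-valued forms of maximal vertical degree, that compares $\bigl[f^{*}\beta\otimes \Id,\, \Lambda_{\omega_{X,\varepsilon}}\bigr]^{-1}$ on such forms to $B^{-1}$, and shows that
\[
\int_{f^{-1}(V)}\Bigl\langle \bigl[f^{*}\beta\otimes \Id,\, \Lambda_{\omega_{X,\varepsilon}}\bigr]^{-1}\tilde u,\, \tilde u\Bigr\rangle_{\omega_{X,\varepsilon}, h'}\, dV_{\omega_{X,\varepsilon}}
\xrightarrow[\varepsilon \to 0]{} \int_V \langle B^{-1}u, u\rangle_{\omega_{Y},H}\,dV_{\omega_Y}.
\]
This curvature-operator identity is the technical core and the step that genuinely uses Nakano rather than Griffiths positivity.

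Applying the optimal $L^{2}$-estimate for the $\Dbar$-equation on the complete K\"ahler manifold $(f^{-1}(V), \omega_{X,\varepsilon})$ to $(L, h')$ with $\ai\Theta_{h'} \geq f^{*}\beta$ yields, for each $\varepsilon > 0$, a solution $\tilde v_{\varepsilon}$ of $\Dbar \tilde v_{\varepsilon} = \tilde u$ whose $L^{2}$-norm is bounded by the integral above. A weak-limit argument as $\varepsilon \to 0$ produces a limit $\tilde v$; the vertical-degree constraint forces $\tilde v$ to be of the form $f^{*}v \wedge \alpha$ with $\alpha$ fiberwise holomorphic, so that $\tilde v$ descends to a section $v$ of $\Omega_{V}^{n_{Y},0} \otimes F$ solving $\Dbar v = u$ with the required estimate. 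Absorbing the bounded gauge $e^{-\psi}$ and then letting $\beta \searrow \theta$ yields the statement. The globally K\"ahler case $(2)$ follows by the same argument run with a global K\"ahler form on $X$. The main obstacles I anticipate are: (i) the curvature-operator comparison, which must accommodate the mere continuity of $\theta$ and the singularities of $h$; (ii) showing the weak limit $\tilde v$ has no horizontal component, so that the pushdown is well-defined and no mass is lost; and (iii) handling $\I(h)$, which can be treated either intrinsically as in \cite{HPS18, PT18} or via an equisingular regularization of $h$ that preserves the optimal bound in the limit.
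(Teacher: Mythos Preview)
Your plan matches the paper's proof almost step for step: lift the $E$-valued $(m,q)$-form $g$ on $Y$ to an $L$-valued $(n+m,q)$-form $\tilde g$ on $X$, compare curvature operators through the family $\omega_\varepsilon = f^*\omega_Y + \varepsilon\,\omega_X$ (the paper records this as a monotone \emph{inequality} valid for every $\varepsilon>0$, via Lemma \ref{lem-HermitianDaisyou}, not merely a limit identity), solve $\Dbar\tilde u_\varepsilon = \tilde g$ upstairs with the optimal bound, pass to a weak limit, and use the uniform-in-$\varepsilon$ estimate to force the limit to have no $d\bar z$-components, so that it descends. The paper carries this out for all $q$, but nothing structural changes from your $q=1$ sketch.

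Two corrections. First, the closing step ``letting $\beta\searrow\theta$'' misreads Definition \ref{def-dbarNak}: there the smooth $\psi$ with $\theta+\idd\psi>0$ is part of the \emph{given} test data, and the required estimate already carries the weight $e^{-\psi}$ and the operator $B_{\omega_\Omega,\psi,\theta}=[(\theta+\idd\psi)\otimes\Id,\Lambda]$; one proves it for that fixed $\psi$, with no limit in $\beta$. Second, and more substantively, the sentence ``applying the optimal $L^2$-estimate \dots\ to $(L,h')$'' skips the one genuinely delicate maneuver on $X$. Since $h$ is singular, H\"ormander's estimate does not apply directly; the paper regularizes $h$ on an exhaustion $X_j\Subset X$ by Demailly's theorem, but the approximants satisfy only $\ai\Theta_{h_{j,\varepsilon,\delta}}\geq f^*\theta-\delta\,\omega_\varepsilon$. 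This $\delta$-loss of positivity is absorbed by a twisted estimate with correcting term (Lemma \ref{lem-simplel2correcting}): one gets $\Dbar\tilde u_\delta+\sqrt{2q\delta}\,\tau_\delta=\tilde g$ with a uniform bound on both pieces, and $\sqrt{\delta}\,\tau_\delta\to 0$ weakly as $\delta\to 0$. Your item (iii) names regularization but not this loss-and-recovery mechanism; without it the step you label routine is where the real work lies, and the ``intrinsic'' alternative via \cite{HPS18,PT18} only yields Griffiths positivity through Ohsawa--Takegoshi extension, not Nakano-type $\Dbar$-solvability.
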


To clarify our contribution, we review and compare the previous works to Theorem \ref{thm-main}.
Consider the setting of Theorem \ref{thm-main} and suppose that $\theta=0$.
When $h$ is a smooth Hermitian metric and $f \colon X \to Y$ is a smooth fibration (i.e.,\,a holomorphic submersion),
the breakthrough work \cite{Ber09} proved that 
the canonical $L^2$-metric $H$
is a smooth Hermitian metric on $f_{*}(\mathcal{O}_{X}(K_{X/Y}+L))$
whose Chern curvature is Nakano positive.
This result has been generalized to the ``non-smooth'' setting by \cite{HPS18, PT18}.
More specifically, building upon the work \cite{PT18},
the work \cite{HPS18} proved that $H$ is a (possibly) singular Hermitian metric on 
$f_{*}(\mathcal{O}_{X}(K_{X/Y}+L) \otimes \I(h))$ satisfying Griffiths positivity.
This is an elegant application of the Ohsawa-Takegoshi $L^{2}$-extension theorem with optimal $L^{2}$-estimate.
For smooth Hermitian metrics, Nakano positivity is a stronger notion than Griffiths positivity.  
Therefore, the study of Nakano positivity in the ``non-smooth'' setting is a natural and important problem. 
In tackling this problem,
we face the difficulty that the Chern curvature cannot be defined for singular Hermitian metrics (see \cite{Rau15}), and the correct definition of Nakano positivity is not immediately obvious.
The work \cite{DNWZ23}  showed that Nakano positivity of smooth Hermitian metrics
can be characterized by the $\overline{\partial}$-equation with optimal $L^2$-estimate.
In this paper, by adopting the definition based on this characterization,
we prove the main result, generalizing not only \cite{Ber09} to the ``non-smooth'' setting 
but also \cite{HPS18, PT18} to singular Nakano positivity.
We emphasize that our proof does not rely on the Griffiths positivity.

This paper presents two applications of Theorem \ref{thm-main}. 
The first application proves the singular Nakano positivity of the Narasimhan--Simha metric 
on direct image sheaves of relative pluri-canonical bundles (see Corollary \ref{cor-main}). 
In this proof, as opposed to Theorem \ref{thm-main}, 
we use the Griffiths positivity results established in \cite{HPS18} and \cite{PT18}.

A vanishing theorem of Kodaira type can be expected for singular Nakano positive vector bundles,
which is an advantage of considering Nakano positivity compared to Griffiths positivity.
In this context, as the second application,
we prove a vanishing theorem of Koll\'ar-Ohsawa type
(see Corollary \ref{cor-vanish} and cf.\,\cite{Mat16, Ohs84, Kol86a, Kol86b}).
Corollary \ref{cor-vanish} follows directly from Theorem \ref{thm-main} and \cite[Theorem 1.5]{Ina22} 
when $Y$ is projective and $f_{*}(\mathcal{O}_{X}(K_{X}+L) \otimes \I(h))$ is a locally free sheaf.
However, in the general case,
we utilize the proof of Theorem \ref{thm-main}
to solve the $\overline{\partial}$-equation with optimal $L^2$-estimates,
thereby deducing the vanishing theorem.
In this regard, Corollary \ref{cor-vanish} can be viewed as a quantitative version
of the vanishing theorem of Koll\'ar-Ohsawa type.

\begin{cor}[{{cf.\,\cite{Mat16, Ohs84, Kol86a, Kol86b}}}] \label{cor-vanish}
Consider the same situation as in Theorem \ref{thm-main}. 
Assume that $X$ is a weakly pseudo-convex K\"ahler manifold and  $\sqrt{-1}\Theta_{h} \geq f^{*} \omega_{Y}$ holds on $Y$ 
for some K\"ahler form $\omega_{Y}$ on $Y$. 

Then, we have the following vanishing theorem$:$
$$
H^{q}(Y, f_{*}(\mathcal{O}_{X}(K_{X}+L) \otimes \I(h))) =0 \quad \text{ for all $q>0$.}
$$ 

\end{cor}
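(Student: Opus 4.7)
Write $\mathcal{F} := f_*(\mathcal{O}_X(K_{X/Y}+L) \otimes \I(h))$, so that $f_*(\mathcal{O}_X(K_X+L) \otimes \I(h)) \cong \mathcal{F} \otimes K_Y$. The plan is to apply Theorem \ref{thm-main}(2) and then deduce the vanishing of $H^q(Y, \mathcal{F} \otimes K_Y)$ from a Nakano-type vanishing in the $L^2$-sense. Since $X$ is K\"ahler and $\ai\Theta_h \geq f^{*}\omega_Y$, Theorem \ref{thm-main}(2) applied with $\theta = \omega_Y$ gives
$$
\ai\Theta_H \geq^{L^2}_{\glo \Nak} \omega_Y \otimes \Id
$$
globally on $Y$ for the canonical $L^2$-metric $H$, which is precisely the strict positivity that powers the H\"ormander-type solvability of $\overline{\partial}$-equations with optimal estimates.

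If $Y$ is projective and $\mathcal{F}$ is locally free, the conclusion is immediate from \cite[Theorem 1.5]{Ina22}, a Nakano-type vanishing theorem for holomorphic vector bundles endowed with a singular Nakano positive metric in the $L^2$-sense. In the general case, where $\mathcal{F}$ may fail to be locally free and $Y$ need not be projective, we cannot invoke \cite[Theorem 1.5]{Ina22} directly. Instead, we open the hood of the proof of Theorem \ref{thm-main}: it produces, for every suitable $\overline{\partial}$-closed $L$-valued $(n,q)$-form $u$ on $X$ (with $n = \dim X$) that is locally $L^2$ with respect to $h$, a solution $v$ of $\overline{\partial} v = u$ obeying an optimal $L^2$-estimate driven by the strict positivity $f^{*}\omega_Y$. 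A class in $H^q(Y, \mathcal{F} \otimes K_Y)$ is represented by \v{C}ech-type data that lifts, via the tautological identification of local sections of $\mathcal{F}$ with $L^2$-integrable relative canonical forms on $X$, to such a form $u$; the resulting solution $v$ pushes forward to a primitive on $Y$, forcing the class to vanish.

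The main obstacle is this bridging step in the general case. One must set up the precise dictionary between \v{C}ech cohomology of the coherent sheaf $\mathcal{F} \otimes K_Y$ on $Y$ and $L^2$-Dolbeault data on $X$, and verify that the solutions provided by the proof of Theorem \ref{thm-main} descend under $f_*$ to genuine \v{C}ech trivializations on $Y$. This is delicate because $\mathcal{F}$ lacks a local trivialization and the $L^2$-integrability must be preserved throughout the direct-image procedure; resolving this will require a careful bookkeeping of the weight functions used in the H\"ormander argument to ensure compatibility with the coherent-sheaf structure downstairs.
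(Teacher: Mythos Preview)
Your outline matches the paper's overall strategy---lift a \v{C}ech representative on $Y$ to an $L$-valued $(n,q)$-form on $X$, solve the $\overline{\partial}$-equation there using the positivity $\ai\Theta_h \ge f^*\omega_Y$, and conclude vanishing---but what you have written is a plan rather than a proof, and the step you flag as ``the main obstacle'' is precisely the one the paper handles by a device you have not identified.

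The paper does \emph{not} push the solution $\widetilde u$ back down to a primitive on $Y$. Instead it builds a map
\[
\varphi\colon \check{H}^q(\mathcal{U},K_Y\otimes \mathcal{E})\longrightarrow H^q(X,K_X\otimes L\otimes \I(h))
\]
via the projection formula and a partition of unity on a Stein cover $\mathcal{U}$, and invokes the argument of \cite{Mat16,Ohs84} to show that $\varphi$ is \emph{injective}. One then only needs to kill the image $\varphi(\alpha)$ in Dolbeault cohomology on $X$; no descent of the solution through the non--locally-free locus of $\mathcal{E}$ is required. This injectivity trick is the missing idea in your proposal, and it is exactly what sidesteps the ``delicate bookkeeping'' you anticipate. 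Note also that Theorem~\ref{thm-main}(2) by itself only controls Stein subsets of $Y_{\mathcal{E}}$ (see Definition~\ref{def-dbarNak-sheaf}), so invoking it globally on $Y$ is not enough; the paper instead goes back to Claim~\ref{claim2} inside the proof of Theorem~\ref{thm-technical} to solve $\overline{\partial}\widetilde u=\widetilde g$ on $X_{\mathcal{E}}$ and then extends across the bad locus by $L^2$-methods.

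There is also a finiteness issue you do not address: the representative $g=\varphi_Y(\alpha)$ built from the partition of unity is compactly supported on each $U_i$ but only \emph{locally} $L^2$ on $Y$, and the integral $\int_Y\langle B^{-1}_{\omega_Y,\theta_\psi}g,g\rangle_{\omega_Y,H}e^{-\psi}\,dV_{\omega_Y}$ need not be finite for $\psi=0$. The paper uses the weak pseudo-convexity of $Y$ to replace $\psi$ by $\chi\circ\psi$ for a rapidly increasing convex $\chi$, forcing convergence; this step must appear somewhere in any correct argument.
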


\subsection*{Organization of the paper}\label{organization}
The paper is organized as follows:
In Section \ref{Sec-2}, we review the definition of singular Nakano positivity
and prove the technical lemmas necessary for the proof of the main result.
Section \ref{Sec-3} is divided into two parts:
In the first half, we solve the $\bar{\partial}$-equation with optimal $L^{2}$-estimates
in an appropriate setting.
In the latter half, we derive Theorem \ref{thm-main} and Corollary \ref{cor-vanish}.
In Section \ref{Sec-4}, as a corollary of Theorem \ref{thm-main}, 
we prove the singular Nakano positivity of the Narasimhan--Simha metric on direct image sheaves. 
In Appendix \ref{appendix}, we summarize useful results on the approximation and extension of singular Hermitian metrics, which may already be known to experts.

\subsection*{Acknowledgment}\label{subsec-ack}
The authors would like to thank Professor Masataka Iwai and Professor Hisashi Kasuya, the organizers 
of the \lq \lq Workshop on Complex Geometry in Osaka," where their collaboration began.
They would also like to thank Professor Shigeharu Takayama 
for suggesting the addition of Corollary \ref{cor-vanish}. 
T.\,I.\,is supported by Grant-in-Aid for Early-Career Scientists $\sharp$23K12978 
from the Japan Society for the Promotion of Science (JSPS).
S.\,M.\,is supported 
by Grant-in-Aid for Scientific Research (B) $\sharp$21H00976 from JSPS.

\section{Preliminaries}\label{Sec-2}

We begin by reviewing the definition of singular Nakano positivity.
The direct image sheaf treated in this paper is a torsion-free sheaf, but not necessarily locally free.
Therefore, for our purpose, we require Definition \ref{def-dbarNak-sheaf}.

\begin{defi}[Singular Nakano positivity for vector bundles, {\cite{DNWZ23, Ina22}}]\label{def-dbarNak} 
$ $
Let $E$ be a vector bundle on a complex manifold $X$ and 
$\theta$ be a continuous  $(1,1)$-form on $X$. 
Consider a singular Hermitian metric $h$ on  $E$ on $X$ with $h^*$ being upper semi-continuous 
(i.e.,\,the function $|u|^2_{h^*}$ is upper semi-continuous for any local  section $u$ of $E^*$). 
\smallskip

 (1) We define \textit{the global $\theta$-Nakano positivity in the sense of $L^2$-estimates}, denoted by 
    \[
    \ai\Theta_h \geq^{L^2}_{\glo\Nak} \theta \otimes \Id_{E} \text{ on } X,
    \]
as follows:
For any data consisting of: 
\begin{itemize}
\item a Stein coordinate $\Omega \subset X$ admitting a trivialization $E|_\Omega\cong \Omega\times \C^r$; 
\item a K\"ahler form $\omega_\Omega$ on $\Omega$; 
\item a smooth function $\psi$ on $\Omega$ such that $ \theta + \deldel \psi >0$; 
\item a positive integer $q$ with $1\leq q\leq n$; 
\item a $\overline{\partial} $-closed $g\in L^2_{n,q}(\Omega, E; \omega_\Omega, he^{-\psi})$ 
satisfying 
$$\int_\Omega \langle B^{-1}_{\omega_\Omega,\psi,\theta }g, g\rangle_{\omega_\Omega,h}e^{-\psi} \,dV_{\omega_\Omega}<\infty,$$ 
\end{itemize}
 there exists $u\in L^2_{n,q-1}(\Omega, E; \omega_\Omega, he^{-\psi})$ such that 
    \[
     \overline{\partial}  u=g \text{ and } 
    \int_\Omega |u|^2_{\omega_\Omega, h}e^{-\psi} \,dV_{\omega_\Omega}\leq \int_\Omega \langle B^{-1}_{\omega_\Omega,\psi,\theta}g, g\rangle_{\omega_\Omega,h}e^{-\psi} \,dV_{\omega_\Omega},
    \]
    where $B_{\omega_\Omega,\psi,\theta}=[(\deldel\psi +\theta )\otimes \Id_E,\Lambda_{\omega_\Omega}]$. 
\smallskip

(2) 
We define \textit{the local $\theta$-Nakano positivity in the sense of $L^2$-estimates}, denoted by 

    \[
    \ai\Theta_h \geq^{L^2}_{\loc\Nak} \theta \otimes \Id_{E} \text{ on } X,
    \]
as follows: For any point $x\in X$, there exists an open neighborhood $U$ of $x$ such that 
$
    \ai\Theta_h \geq^{L^2}_{\glo\Nak} \theta \otimes \Id_{E} \text{ on } U.
$
\end{defi}

\begin{defi}[Singular Nakano positivity for torsion-free sheaves]\label{def-dbarNak-sheaf}
Let $\mathcal{E}$ be a torsion-free sheaf and $\theta$ be a continuous $(1,1)$-form 
on a complex manifold $X$.
Let $X_{\mathcal{E}}$ denote the largest Zariski open set where $\mathcal{E}$ is locally free.
Consider a singular Hermitian metric $h$ on $\mathcal{E}$
(i.e.,\,a singular Hermitian metric on the vector bundle $\mathcal{E}|_{X_{\mathcal{E}}}$).
Assume that the function $|u|^2_{h^*}$ is upper semi-continuous for any local section $u$ of $(\mathcal{E}|_{X_{\mathcal{E}}})^*$.
% We say that the sheaf $(\mathcal{E}, h)$ is (global/local) $\theta$-Nakano positive in the sense of $L^2$-estimates
% if $(\mathcal{E}|_{X_{\mathcal{E}}}, h)$ satisfies this property.
We say that the sheaf $(\mathcal{E}, h)$ is global $\theta$-Nakano positive in the sense of $L^2$-estimates if $(\mathcal{E}|_{X_{\mathcal{E}}}, h)$ is global $\theta$-Nakano positive.
We also say that the sheaf $(\mathcal{E}, h)$ is local $\theta$-Nakano positive in the sense of $L^2$-estimates
if for any point $x\in X$, there exists an open neighborhood $U$ of $x$ such that $(\mathcal{E}|_{X_{\mathcal{E}}\cap U}, h)$ is global $\theta$-Nakano positive.
\end{defi}

The following lemmas play an important role in the proof of our main result.
For the reader's convenience, we provide the proofs and relevant references below.
Lemma \ref{lem-simplel2correcting} is directly derived from \cite[Lemma 9.10]{GMY23}
by setting $\eta=1$ and $g=k$ in {\cite[Lemma 9.10]{GMY23}}, 
and then taking the limit as $k \to \infty$.
Lemma \ref{lem-HermitianDaisyou} is derived from 
the proof of \cite[Chapter V\hspace{-1.2pt}I\hspace{-1.2pt}I\hspace{-1.2pt}I, Lemma (6.3)]{Dem-book}.

\begin{lemm}[{cf.\,\cite[Lemma 9.10]{GMY23}}]\label{lem-l2correcting}
    Let $X$ be a complete K\"ahler manifold with a $($not necessarily complete$)$ K\"ahler form $\omega$, 
    and $(Q,h)$ be a vector bundle with a smooth Hermitian metric $h$. 
Let   $\eta > 0$ and $g >0 $ be smooth functions on $X$ such that $(\eta+g)$ and $(\eta+g)^{-1}$ are bounded. 
Let $\lambda\geq 0$ be a bounded continuous function on $X$ such that $(B+\lambda I)$ is semi-positive definite everywhere on $\wedge^{n,q}T^* X\otimes Q$, 
where $n:=\dim X$ and $B:=B^{n,q}_{h,\eta,g,\omega}=[\eta\ai\Theta_{Q,h}-\deldel\eta-\ai g\partial\eta\wedge\overline{\partial} \eta, \Lambda_\omega]$. 

Then, for a given form $v\in L^2_{n,q}(X, Q; \omega, h)$ 
with 
$$\text{
$\overline{\partial}  v=0$ and $\int_X \langle (B+\lambda I)^{-1}v, v\rangle_{\omega, h}\,dV_\omega<\infty$, 
 }$$  
   there exist an approximate solution $u\in L^2_{n,q-1}(X, Q; \omega, h)$ and a correcting term 
   $\tau \in L^2_{n,q}(X, Q; \omega, h)$ such that 
$\overline{\partial}  u+P_h (\sqrt{\lambda}\tau )=v$ and   \[
\int_X (\eta+g^{-1})^{-1}|u|^2_{\omega,h} \,dV_\omega+ \int_X |\tau|^2_{\omega,h}\,dV_\omega \leq \int_X \langle (B+\lambda I)^{-1}v, v\rangle_{\omega,h} \,dV_\omega, 
    \]
where $P_h\colon  L^2_{n,q}(X, Q; \omega, h)\to \Ker \overline{\partial} $ is the orthogonal projection.
\end{lemm}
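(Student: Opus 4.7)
The plan is to adapt the classical H\"ormander--Demailly duality method, combining the two-weight twisted Bochner--Kodaira--Nakano inequality with a $\lambda I$-perturbation that produces the auxiliary correcting term $\tau$. Using the standard approximation of the (possibly non-complete) K\"ahler form $\omega$ by complete K\"ahler forms $\omega_{\varepsilon}\searrow \omega$ on the complete K\"ahler manifold $X$, together with a weak-compactness argument from the uniform $L^{2}$-bound that will be produced, one may assume from the outset that $\omega$ itself is complete.

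The key analytic input is Demailly's twisted $L^{2}$-inequality: for every smooth compactly supported $Q$-valued $(n,q)$-form $\alpha$,
$$
\int_{X} (\eta+g^{-1})\,|\overline{\partial}^{*}\alpha|^{2}_{\omega,h}\,dV_{\omega}+\int_{X}\eta\,|\overline{\partial}\alpha|^{2}_{\omega,h}\,dV_{\omega}\;\geq\;\int_{X}\langle B\alpha,\alpha\rangle_{\omega,h}\,dV_{\omega},
$$
with $B$ exactly the curvature operator in the statement. By Demailly's density lemma this extends to all $\alpha\in \mathrm{Dom}(\overline{\partial})\cap \mathrm{Dom}(\overline{\partial}^{*})$; it is this extension that requires the completeness of $\omega$. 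Adding $\int_{X}\lambda|\alpha|^{2}\,dV_{\omega}$ to both sides yields strict control by $B+\lambda I$.

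The duality then proceeds as follows. For any $w\in\mathrm{Dom}(\overline{\partial}^{*})$ with $\overline{\partial}w=0$, Cauchy--Schwarz applied pointwise to the positive-definite operator $B+\lambda I$, combined with the inequality above, gives
$$
|\langle v,w\rangle|^{2}\leq \Big(\int_{X}\langle (B+\lambda I)^{-1}v,v\rangle_{\omega,h}\,dV_{\omega}\Big)\cdot \Big(\int_{X}(\eta+g^{-1})|\overline{\partial}^{*}w|^{2}_{\omega,h}\,dV_{\omega}+\int_{X}\lambda|w|^{2}_{\omega,h}\,dV_{\omega}\Big).
$$
Hence the linear functional $w\mapsto \langle v,w\rangle$ on $\ker\overline{\partial}\cap \mathrm{Dom}(\overline{\partial}^{*})$ is dominated by the $L^{2}_{n,q-1}\oplus L^{2}_{n,q}$-norm of $\bigl((\eta+g^{-1})^{1/2}\overline{\partial}^{*}w,\sqrt{\lambda}w\bigr)$. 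By the Hahn--Banach and Riesz representation theorems, there exist $u\in L^{2}_{n,q-1}(X,Q;\omega,h)$ and $\tau\in L^{2}_{n,q}(X,Q;\omega,h)$ with $\int(\eta+g^{-1})^{-1}|u|^{2}+\int|\tau|^{2}\leq \int\langle (B+\lambda I)^{-1}v,v\rangle$ and $\langle v,w\rangle=\langle u,\overline{\partial}^{*}w\rangle+\langle \sqrt{\lambda}\tau,w\rangle$ for every admissible $w$. Since $v\in\ker\overline{\partial}$ and the pairing $\langle \sqrt{\lambda}\tau, w\rangle$ equals $\langle P_{h}(\sqrt{\lambda}\tau),w\rangle$ on $\ker\overline{\partial}$, this identity is the weak form of $\overline{\partial}u+P_{h}(\sqrt{\lambda}\tau)=v$ and upgrades to an $L^{2}$-identity.

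The main obstacle is the pair of approximation arguments: Demailly's density lemma extending the twisted inequality from $C^{\infty}_{c}$-forms to $\mathrm{Dom}(\overline{\partial})\cap\mathrm{Dom}(\overline{\partial}^{*})$ requires completeness of $\omega$, and in the original non-complete case the $L^{2}$-estimate must be descended from $\omega_{\varepsilon}$ to $\omega$, which rests on the monotone behaviour of the relevant integrands for bidegree $(n,\bullet)$-forms under the replacement $\omega\leadsto \omega_{\varepsilon}$ and allows a weak $L^{2}$-limit of $(u_{\varepsilon},\tau_{\varepsilon})$ to be extracted with the desired bound. The $\lambda I$-perturbation is the essential device making the whole scheme function when $B$ is only semi-positive: it substitutes for the strict positivity $B>0$ required by ordinary H\"ormander solvability, at the cost of the extra term $P_{h}(\sqrt{\lambda}\tau)$ on the right-hand side of the equation.
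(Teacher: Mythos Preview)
The paper does not give its own proof of this lemma; it is stated with the citation ``cf.\ \cite[Lemma 9.10]{GMY23}'' and used as a black box, while the surrounding text only explains how the simpler Lemma~\ref{lem-simplel2correcting} is obtained from it. Your outline---the twisted Bochner--Kodaira--Nakano inequality with the $(\eta,g)$-twist, the $\lambda I$-perturbation, the Cauchy--Schwarz/Hahn--Banach duality producing $(u,\tau)$, and the reduction to complete $\omega$ via $\omega_\varepsilon\searrow\omega$ together with the monotonicity of $(n,q)$-integrands---is exactly the standard argument behind results of this type (and is essentially what one finds in \cite{GMY23} and its antecedents), so your proposal is correct and aligned with the intended proof.

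One small point worth making explicit in your write-up: when you pass from the identity $\langle v,w\rangle=\langle u,\overline{\partial}^{*}w\rangle+\langle\sqrt{\lambda}\tau,w\rangle$ on $\ker\overline{\partial}\cap\mathrm{Dom}(\overline{\partial}^{*})$ to the equation $\overline{\partial}u+P_h(\sqrt{\lambda}\tau)=v$, you implicitly use that any $w\in\mathrm{Dom}(\overline{\partial}^{*})$ decomposes as $w_1+w_2$ with $w_1\in\ker\overline{\partial}\cap\mathrm{Dom}(\overline{\partial}^{*})$ and $w_2\in(\ker\overline{\partial})^{\perp}\subset\ker\overline{\partial}^{*}$, so that $\overline{\partial}^{*}w=\overline{\partial}^{*}w_1$; this is what lets you conclude $u\in\mathrm{Dom}(\overline{\partial})$ with the claimed equation. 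Likewise, in the $\omega_\varepsilon\to\omega$ limit the projection $P_h$ depends on the metric, so one should either observe that $\ker\overline{\partial}$ is the same closed subspace in all the (equivalent, for $(n,q)$-forms) $L^2$-topologies, or take the weak limit directly in the identity for test forms.
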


\begin{lemm}[{cf.\,\cite[Lemma 3.2]{ZZ18}}]\label{lem-simplel2correcting}
    Let $X$ be a complete K\"ahler manifold with a $($not necessarily complete$)$ K\"ahler form $\omega$, 
    and $(Q,h)$ be a vector bundle with a smooth Hermitian metric $h$. 
    Let $\delta\geq 0$ be a semi-positive constant such that $(B+\delta I)$  is semi-positive definite everywhere on $\wedge^{n,q}T^* X\otimes Q$, where $B:=B^{n,q}_{Q,h,\omega}=[\ai\Theta_{Q,h}, \Lambda_\omega]$. 

    Then, for a given form $v\in L^2_{n,q}(X, Q; \omega, h)$ 
with 
$$\text{
$\overline{\partial}  v=0$ and $\int_X \langle (B+\delta I)^{-1}v, v\rangle_{\omega, h}\,dV_\omega<\infty$, 
 }$$  
   there exist an approximate solution $u\in L^2_{n,q-1}(X, Q; \omega, h)$ and a correcting term 
   $\tau \in L^2_{n,q}(X, Q; \omega, h)$ such that 
$\overline{\partial}  u+\sqrt{\delta}\tau =v$ and 
\[
\int_X|u|^2_{\omega,h} \,dV_\omega+ \int_X |\tau|^2_{\omega,h}\,dV_\omega \leq \int_X \langle (B+\delta I)^{-1}v, v\rangle_{\omega,h} \,dV_\omega.
\]
\end{lemm}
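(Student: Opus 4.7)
The plan is to apply Lemma \ref{lem-l2correcting} with the special choice $\eta \equiv 1$, $g \equiv k$ for a positive constant $k$, and $\lambda \equiv \delta$, and then to let $k \to \infty$. For this choice, $\eta + g = 1 + k$ and $(\eta+g)^{-1}$ are both bounded for each fixed $k$, so the function hypotheses of Lemma \ref{lem-l2correcting} are met. Since $\eta$ is constant we have $\partial \eta = \overline{\partial}\eta = 0$ and $\deldel \eta = 0$, so the curvature matrix $B^{n,q}_{h,\eta,g,\omega}$ collapses to $[\ai\Theta_{Q,h}, \Lambda_\omega] = B$; the constant $\lambda \equiv \delta$ is bounded and continuous, and $B + \lambda I \geq 0$ is precisely our hypothesis.

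Applying Lemma \ref{lem-l2correcting} then yields, for each $k$, forms $u_k \in L^2_{n,q-1}(X, Q; \omega, h)$ and $\tau_k \in L^2_{n,q}(X, Q; \omega, h)$ such that $\overline{\partial} u_k + P_h(\sqrt{\delta}\,\tau_k) = v$ and
\[
\frac{k}{k+1}\int_X |u_k|^2_{\omega,h}\, dV_\omega + \int_X |\tau_k|^2_{\omega,h}\, dV_\omega \leq C,
\]
where $C := \int_X \langle (B+\delta I)^{-1} v, v\rangle_{\omega,h}\, dV_\omega$. Since $\delta$ is constant, $P_h(\sqrt{\delta}\tau_k) = \sqrt{\delta}\,P_h(\tau_k)$; setting $\tilde\tau_k := P_h(\tau_k) \in \ker \overline{\partial}$ and using that an orthogonal projection is norm-decreasing, the equation rewrites as $\overline{\partial} u_k + \sqrt{\delta}\,\tilde\tau_k = v$ with $\int_X |\tilde\tau_k|^2_{\omega,h}\,dV_\omega \leq \int_X |\tau_k|^2_{\omega,h}\,dV_\omega$.

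Since $(u_k)$ and $(\tilde\tau_k)$ are bounded in $L^2$, I pass to a subsequence along which $u_k \rightharpoonup u$ and $\tilde\tau_k \rightharpoonup \tau$ weakly, and further arrange $\int_X |u_k|^2_{\omega,h}\,dV_\omega \to a$ and $\int_X |\tau_k|^2_{\omega,h}\,dV_\omega \to b$ with $a + b \leq C$ in the limit $k \to \infty$. Weak lower semi-continuity of the $L^2$-norm then gives $\int_X |u|^2_{\omega,h}\,dV_\omega + \int_X |\tau|^2_{\omega,h}\,dV_\omega \leq a + b \leq C$. Finally, $\overline{\partial} u_k = v - \sqrt{\delta}\,\tilde\tau_k$ converges weakly in $L^2$ to $v - \sqrt{\delta}\tau$; since $\overline{\partial}$ is a closed densely defined operator on $L^2$ its graph is weakly closed, whence $u$ lies in the domain of $\overline{\partial}$ and $\overline{\partial} u + \sqrt{\delta}\tau = v$. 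The only delicate step is this passage to the weak limit in the $\overline{\partial}$-equation; the remainder is bookkeeping via $k/(k+1) \to 1$.
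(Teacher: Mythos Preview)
Your proof is correct and follows exactly the approach the paper indicates: apply Lemma \ref{lem-l2correcting} with $\eta\equiv 1$, $g\equiv k$, $\lambda\equiv\delta$, and pass to the limit $k\to\infty$. The replacement of $\tau_k$ by $\tilde\tau_k=P_h(\tau_k)$ and the weak-limit argument using closedness of $\overline{\partial}$ are the natural way to make this precise; the paper itself only sketches this in one line.
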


\begin{comment}
Here, in general, the curvature operator \([\ai\Theta_{Q,h}, \Lambda_\omega]\) on \(\wedge^{p,q}T^*X\otimes Q\) is denoted by \(A^{p,q}_{Q,h,\omega}\) or simply $A_{h,\omega}$, and when \(A_{h,\omega}\) is positive (resp. semi-positive) definite, we simply write \(A_{h,\omega} > 0\) (resp. $\geq0$). 
Then, by \cite[Chapter VIII, Lemma (6.3)]{Dem-book}, we have the following lemma: 

\begin{lemm}[{\cite[Chapter VIII, Lemma (6.3)]{Dem-book}}]\label{lem-HermitianDaisyou}
    Let $(E,h)$ be a holomorphic Hermitian vector bundle over $X$ and $\omega, \gamma$ be Hermitian metrics on $X$ such that $\gamma\geq \omega$. 
    For every $u\in \wedge^{n,q}T^* X\otimes E$ with $q\geq 1$, we have that 
    $|u|^2_{\gamma,h} dV_\gamma \leq |u|^2_{\omega,h} dV_\omega$ and that if $A_{h,\omega}=A^{n,q}_{E,h,\omega}>0$ (resp. $\geq0$) then 
    \[
A_{h,\gamma}>0 \,\,(\text{resp.}\,\geq0), \hspace{5mm} \langle A_{h,\gamma}^{-1}u, u\rangle_{\gamma,h} dV_\gamma \leq  \langle A_{h,\omega}^{-1}u, u\rangle_{\omega,h} dV_\omega.
    \]
    %
\end{lemm}

Let $E$ be a holomorphic vector bundle over $X$.
Here, for any smooth operator $\theta_E\in \cal{C}^\infty(X,\wedge^2T^*X\otimes \mathrm{Hom}(E,E))$ with $\theta_E^*=-\theta_E$, the curvature operator \([\ai\theta_E, \Lambda_\omega]\) on \(\wedge^{p,q}T^*X\otimes E\) is denoted by \(B^{p,q}_{\theta_E,\omega}\) or simply $B_{\theta_E,\omega}$, and when \(B_{\theta_E,\omega}\) is positive (resp. semi-positive) definite, we simply write \(B_{\theta_E,\omega} > 0\) (resp. $\geq0$). 
\end{comment}

\begin{lemm}\label{lem-HermitianDaisyou}
    Let $X$ be a complex manifold of dimension $n$ 
    and $\omega_1,\omega_2$ be Hermitian forms on $X$ with $\omega_1\geq\omega_2$. 
    Let $(E, h)$ be a vector bundle on $X$ with a smooth Hermitian metric $h$ and $\theta_E\in C^\infty(X,\wedge^2T^*X\otimes \mathrm{Hom}(E,E))$ be a smooth operator with $\theta_E^*=-\theta_E$.
Assume that the operator $B_{\theta_E,\omega_2}:=[\ai\theta_{E},\Lambda_{\omega_2}]$ is positive 
$($resp. semi-positive$)$ definite on $\wedge^{n,q}T^*X\otimes E$, 
which we simply write as \(B_{\theta_E,\omega_2} > 0\) $($resp. $\geq0$$)$. 
Then, for any $(n,q)$-form $u\in\wedge^{n,q}T^*X\otimes E$ with $q\geq1$, 
we have 
    \[
    B_{\theta_E,\omega_1}>0 \,\,(\text{resp.}\,\geq0) \text{ and }  \langle B_{\theta_E,\omega_1}^{-1}u, u\rangle_{\omega_1,h} dV_{\omega_1} \leq  \langle B_{\theta_E,\omega_2}^{-1}u, u\rangle_{\omega_2,h} dV_{\omega_2}.
    \]
\end{lemm}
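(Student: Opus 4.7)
My plan is to establish both claims pointwise after simultaneously diagonalizing $\omega_1,\omega_2$, following the classical argument in \cite[Ch.~V\hspace{-1.2pt}I\hspace{-1.2pt}I\hspace{-1.2pt}I, Lemma (6.3)]{Dem-book}. Since positivity of $B_{\theta_E,\omega_i}$ is a pointwise condition and the asserted inequality between $(n,n)$-forms is pointwise as well, it suffices to verify both claims at an arbitrary fixed point $x\in X$. At such $x$, choose linear coordinates $(z_1,\dots,z_n)$ and an $h$-orthonormal basis $(e_\lambda)_{\lambda=1}^r$ of $E_x$ so that
$$\omega_2(x)=\sqrt{-1}\sum_{j=1}^n dz_j\wedge d\bar z_j,\qquad \omega_1(x)=\sqrt{-1}\sum_{j=1}^n \gamma_j\,dz_j\wedge d\bar z_j,$$
with real numbers $\gamma_j\geq 1$ (possible since $\omega_1\geq\omega_2$). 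Writing $\sqrt{-1}\theta_E(x)=\sum c_{jk\lambda\mu}\,dz_j\wedge d\bar z_k\otimes e_\lambda^*\otimes e_\mu$, the hypothesis $\theta_E^*=-\theta_E$ translates into $\overline{c_{jk\lambda\mu}}=c_{kj\mu\lambda}$. For each multi-index $K$ with $|K|=q-1$, set $\gamma_K:=\prod_{l\in K}\gamma_l$ and let $C_K$ be the Hermitian matrix $(c_{jk\lambda\mu})_{j,k\notin K,\,\lambda,\mu}$.

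The crucial calculation is the block decomposition
$$\langle B_{\theta_E,\omega}\,u,u\rangle_{\omega,h}\,dV_\omega \;=\; \sum_{|K|=q-1}\frac{1}{\gamma_K}\,\langle C_K\,\mathbf{a}^K,\mathbf{a}^K\rangle\,dV_0,\qquad \mathbf{a}^K_{(j,\lambda)}:=\frac{u_{K\cup\{j\},\lambda}}{\gamma_j},$$
valid for any $(n,q)$-form $u=\sum u_{J,\lambda}\,dz\wedge d\bar z_J\otimes e_\lambda$, where $dV_0$ is the Euclidean volume in the chosen coordinates. I would derive this formula via the change of variables $w_j=\sqrt{\gamma_j}\,z_j$, which brings $\omega$ into the Euclidean form, followed by the standard Nakano-type computation on $(n,q)$-forms in flat coordinates (cf.\ \cite[Ch.~V\hspace{-1.2pt}I\hspace{-1.2pt}I]{Dem-book}). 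Because the linear map $u\mapsto(\mathbf{a}^K)_K$ is a bijection and each $C_K$ is independent of $\omega$, positivity (resp.\ semi-positivity) of $B_{\theta_E,\omega}$ on $\wedge^{n,q}T^*X\otimes E$ is equivalent to positivity (resp.\ semi-positivity) of every $C_K$, which is an intrinsic condition on $\theta_E$. Hence $B_{\theta_E,\omega_1}>0$ (resp.\ $\geq 0$) whenever $B_{\theta_E,\omega_2}>0$ (resp.\ $\geq 0$), proving the first half of the claim.

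For the inverse comparison, assume $B_{\theta_E,\omega_2}>0$, so every $C_K$ is invertible. Combining the pointwise pairing identity $\langle v,u\rangle_{\omega,h}\,dV_\omega=\sum_{J,\lambda} v_{J,\lambda}\,\bar u_{J,\lambda}\,\gamma_J^{-1}\,dV_0$ with the block formula above, and applying the variational characterization $\langle B^{-1}u,u\rangle=\max_v\bigl(2\,\mathrm{Re}\,\langle v,u\rangle-\langle Bv,v\rangle\bigr)$ blockwise, I would deduce
$$\langle B_{\theta_E,\omega}^{-1}\,u,u\rangle_{\omega,h}\,dV_\omega \;=\; \sum_{|K|=q-1}\frac{1}{\gamma_K}\,\langle C_K^{-1}\,\mathbf{u}^K,\mathbf{u}^K\rangle\,dV_0,\qquad \mathbf{u}^K_{(j,\lambda)}:=u_{K\cup\{j\},\lambda}.$$
Since $\gamma_K\geq 1$ forces $\gamma_K^{-1}\leq 1$ for every $K$, comparing this expression for $\omega=\omega_1$ and $\omega=\omega_2$ yields the desired inequality at $x$. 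The main technical step is the derivation of the block formula for $\langle B_{\theta_E,\omega}u,u\rangle_{\omega,h}\,dV_\omega$ in the previous paragraph, which is a direct but bookkeeping-heavy wedge-and-contraction computation; the reason it takes such a clean form on $(n,q)$-forms specifically is that $\sqrt{-1}\theta_E\wedge u=0$ for $u$ of bidegree $(n,q)$, which reduces $[\sqrt{-1}\theta_E,\Lambda_\omega]u$ to $\sqrt{-1}\theta_E\wedge\Lambda_\omega u$, so the overall $\omega$-dependence collapses to the single scalar $\gamma_K^{-1}$ and the monotonicity becomes transparent.
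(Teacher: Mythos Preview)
Your block formula $\langle B_{\theta_E,\omega}u,u\rangle_{\omega,h}\,dV_\omega=\sum_{|K|=q-1}\gamma_K^{-1}\langle C_K\mathbf a^K,\mathbf a^K\rangle\,dV_0$ is correct, but the two inferences you draw from it both fail once $q\geq 2$. First, the map $u\mapsto(\mathbf a^K)_K$ is not a bijection: the source has dimension $\binom{n}{q}r$ while the target has dimension $\binom{n}{q-1}(n-q+1)r=q\binom{n}{q}r$, so for $q\geq 2$ the image is a proper subspace, and the vectors $\mathbf a^K$ for different $K$ are linked (each coefficient $u_{J,\lambda}$ occurs in $q$ distinct blocks). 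Consequently positivity of $B_{\theta_E,\omega}$ is \emph{not} equivalent to positivity of every $C_K$; it only says that the constrained sum is positive on that image, a condition that genuinely depends on the weights $\gamma_j$. Second, and for the same reason, the variational characterization of $B^{-1}$ cannot be applied ``blockwise'': in $\max_v\bigl(2\,\mathrm{Re}\langle v,u\rangle-\langle Bv,v\rangle\bigr)$ the vectors $\mathbf a^K(v)$ are not free parameters, so the maximum does not split into a sum of block maxima. A clean test case is $n=q=2$, $r=1$, $\sqrt{-1}\theta_E=c_{11}\sqrt{-1}dz_1\wedge d\bar z_1+c_{22}\sqrt{-1}dz_2\wedge d\bar z_2$: the space of $(2,2)$-forms is one-dimensional and one computes directly $\langle B_\omega^{-1}u,u\rangle_\omega\, dV_\omega=|u_{12}|^2/(\gamma_2 c_{11}+\gamma_1 c_{22})\,dV_0$, whereas your block-inverse recipe gives $(\gamma_1^{-1}c_{22}^{-1}+\gamma_2^{-1}c_{11}^{-1})|u_{12}|^2\,dV_0$; these disagree already for $\gamma_1=\gamma_2=1$, $c_{11}=c_{22}=1$. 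Your argument is valid only for $q=1$, where there is a single block $C_\emptyset$ and the map really is a bijection.

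The paper itself gives no argument for the lemma beyond recording that it is ``derived from the proof of \cite[Chapter~V\hspace{-1.2pt}I\hspace{-1.2pt}I\hspace{-1.2pt}I, Lemma~(6.3)]{Dem-book}'', so there is no independent proof to compare against. It is worth noting that in every place the paper actually invokes the lemma, $\sqrt{-1}\theta_E$ is a semi-positive $(1,1)$-form tensored with $\Id_E$; and the same $n=q=2$ example with $c_{11}=2$, $c_{22}=-1$, $\gamma_1$ large shows that the bare hypothesis $B_{\theta_E,\omega_2}>0$ on $\wedge^{n,q}$ does not by itself force $B_{\theta_E,\omega_1}>0$. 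So any correct argument in the stated generality must use more structure on $\theta_E$ than the block reduction you propose, and the gap in your proof is not merely cosmetic.
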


The following lemma is well-known and often applied in the case where $E$ is a line bundle. 
In this paper, we need a generalization of this lemma to vector bundles. While the proof is straightforward, 
we provide a proof for the general case to ensure completeness.

\begin{lemm} \label{lemm-L2}
Let $(E, h)$ be a singular Hermitian vector bundle on a complex manifold $X$. 
Assume that the norm $|t|_{h^{*}}$ with respect to $h^{*}$ is locally bounded 
for any  smooth section $t$ of $E^*$, 
where $h$ is the dual metric on the dual vector bundle $E^{*}$. 
Then, we have: 
\begin{itemize}
\item[$(1)$] Let $s=\sum_{i=1}^{r}s_{i} e_{i}$ be a $($not necessarily holomorphic or smooth$)$ 
section of $E$, where $\{e_{i}\}_{{i=1}}^{r}$ is a local frame of $E$. 
For any $p$ with $1 \leq p \leq \infty$,
the following holds: If $|s|_{h}$ is locally $L^{p}$-integrable,
then the absolute value $|s_i|$ of each component $s_{i}$ is also locally $L^{p}$-integrable.

\item[$(2)$] Let $\omega_X$ be a Hermitian form on $X$.
Let $u$, $v$ be $E$-valued-differential forms on $X$ 
such that $|u|_{h, \omega_X}$ is locally $L^{1}$-integrable and $|v|_{h,\omega_X}$ is locally $L^{2}$-integrable. 
If $\Dbar u=v$ holds on a $($non-empty$)$  Zariski open set $X_{0} \subset X$ in the sense of distributions, 
then $\Dbar u=v$  also holds on $X$. 
\end{itemize}
\end{lemm}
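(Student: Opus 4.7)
For part (1), my plan is straightforward via duality. In a local holomorphic frame $\{e_i\}_{i=1}^{r}$ of $E$ with dual frame $\{e_i^*\}$ of $E^*$, any section $s = \sum_i s_i e_i$ satisfies $s_i = e_i^*(s)$ pointwise, and the standard duality estimate gives $|s_i| \leq |s|_h \cdot |e_i^*|_{h^*}$. Since each $|e_i^*|_{h^*}$ is locally bounded by hypothesis, I obtain $|s_i| \leq C|s|_h$ on relatively compact subsets, which transfers local $L^p$-integrability componentwise.

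For part (2), the plan is to reduce to the scalar case via part (1) and then extend the $\Dbar$-equation across $Z := X \setminus X_0$ by a cutoff-and-limit argument. In a local holomorphic trivialization of $E$ the operator $\Dbar$ acts componentwise, so writing $u = \sum u_i e_i$ and $v = \sum v_i e_i$, part (1) gives $|u_i| \in L^1_{\mathrm{loc}}$ and $|v_i| \in L^2_{\mathrm{loc}}$, and the equation decouples into the scalar identities $\Dbar u_i = v_i$ on $X_0$. In the scalar case, I test against a compactly supported smooth form $\varphi$ on $X$, and introduce standard cutoffs $\chi_\epsilon$ that vanish in a neighborhood of $Z$, satisfy $\chi_\epsilon \to 1$ almost everywhere, and have $\|\Dbar \chi_\epsilon\|_{L^2(K)} \to 0$ on any compact set $K$; for $Z$ locally $\{f = 0\}$ the choice $\chi_\epsilon = \rho(\log|f|^2/\log\epsilon)$ with a suitable smooth $\rho$ achieves this. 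Since $\chi_\epsilon \varphi$ is compactly supported in $X_0$, the hypothesis together with the Leibniz rule $\Dbar(\chi_\epsilon \varphi) = \Dbar\chi_\epsilon \wedge \varphi + \chi_\epsilon \Dbar \varphi$ reduces the desired identity $\Dbar u = v$ on $X$ to the vanishing of the boundary term
\[
\int_X u \wedge \Dbar \chi_\epsilon \wedge \varphi \longrightarrow 0 \quad \text{as } \epsilon \to 0,
\]
the remaining terms converging by dominated convergence under the $L^1_{\mathrm{loc}}$ assumptions on $u$ and $v$.

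The control of this boundary term is the main obstacle. The naive Cauchy--Schwarz bound calls for an $L^2$-bound on $u$, which is stronger than the stated hypothesis, so the $L^2$-integrability of $v$ must be used essentially. The strategy I would follow is to apply H\"ormander's $L^2$-existence theorem on a small Stein neighborhood $U$ of a point of $Z$ to produce $u' \in L^2_{\mathrm{loc}}(U)$ with $\Dbar u' = v$ on $U$ as distributions, and then write $u = u' + w$ on $U$, with $w := u - u'$ being $\Dbar$-closed on $U \cap X_0$ and lying in $L^1_{\mathrm{loc}}(U)$. The $u'$-piece contributes at most $\|\varphi\|_\infty \|u'\|_{L^2} \|\Dbar \chi_\epsilon\|_{L^2}$, which tends to $0$ by Cauchy--Schwarz, while the residual $\Dbar$-closed $w$-piece is dealt with by a removable-singularity-type argument exploiting the analyticity and positive codimension of $Z$. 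Assembling these steps yields $\Dbar u = v$ on $X$.
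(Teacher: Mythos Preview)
Your treatment of part~(1) is correct and coincides with the paper's argument: the paper normalizes so that $|\delta e_i^*|_{h^*}\leq 1$ and then reads off $\delta|s_i|=|\langle s,\delta e_i^*\rangle|\leq |s|_h$, which is exactly your duality estimate.

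For part~(2) the paper's proof is a one--line citation: apply part~(1) to pass to scalar components and invoke \cite[Lemma~6.9]{Dem82}. You correctly identify that the standard cutoff argument needs the form being differentiated to lie in $L^2_{\mathrm{loc}}$, and you try to repair this by writing $u=u'+w$ with $u'\in L^2_{\mathrm{loc}}$ a H\"ormander solution of $\Dbar u'=v$ and $w\in L^1_{\mathrm{loc}}$ a $\Dbar$--closed remainder on $U\setminus Z$. The gap is in the last step: an $L^1_{\mathrm{loc}}$ form that is $\Dbar$--closed off a hypersurface need not be $\Dbar$--closed across it. On $\C^2$ with $Z=\{z_1=0\}$, the form $w=z_1^{-1}\,d\bar z_2$ has $|z_1|^{-1}\in L^1_{\mathrm{loc}}$ and $\Dbar w=0$ on $\C^2\setminus Z$, yet $\Dbar w=\pi\,\delta_{\{z_1=0\}}\,d\bar z_1\wedge d\bar z_2\neq 0$ on $\C^2$. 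Taking $E$ trivial with $h$ smooth, $u=w$ and $v=0$, this same example shows that part~(2) fails under the literal $L^1$ hypothesis on $u$, so no argument can close the gap as written. In every place the paper actually applies the lemma (the end of Step~\ref{step4} in Theorem~\ref{thm-technical} and the proof of Corollary~\ref{cor-main}) the form $u$ satisfies an $L^2$ bound with respect to $H$, so part~(1) yields scalar components in $L^2_{\mathrm{loc}}$ and Demailly's lemma applies verbatim; read the $L^1$ hypothesis on $u$ as $L^2$, and then your cutoff argument already goes through without the H\"ormander detour.
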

\begin{proof}
Note that the singular Hermitian metric $h$ is not defined on a set of Lebesgue measure zero, 
thus the lemma and its proof are valid almost everywhere on $X$.  
The norm $|s|_{h}$ is characterized as follows: 
$$
|s|_{h}=\sup\{ |\langle s, t \rangle_{\rm{pair}}| \, \mid\, t \text{ is a section of $E^{*}$ with }
|t|_{h^{*}} \leq 1\}, 
$$
where $\langle \bullet, \bullet \rangle_{\rm{pair}}$ is the point-wise natural pairing. 
By assumption, we can find a constant $\delta>0$ with $|\delta e^{*}_{i}| \leq 1$, 
where $\{e^{*}_{i}\}_{i=1}^{r}$ is the dual frame of $\{e_{i}\}_{{i=1}}^{r}$. 
Consequently, conclusion (1) follows from the inequality:  
$$ 
\delta |s_{i}| = |\langle s, \delta e^{*}_{i} \rangle_{\rm{pair}}| \leq |s|_{h}. 
$$
Conclusion (2) is then derived from Conclusion (1) and \cite[Lemma 6.9]{Dem82}.
\end{proof}

\section{Proof of the main results}\label{Sec-3}

In this section, we first establish Theorem \ref{thm-technical}, 
solving the $\Dbar$-equation with optimal $L^2$-estimate under Setting \ref{setup}. 
At the end of this section, as applications of Theorem \ref{thm-technical}, 
we prove Theorem \ref{thm-main} and Corollary \ref{cor-vanish}.

\begin{setup}\label{setup}
Under the setting of Theorem \ref{thm-main}, we further assume 
that $X$ is a K\"ahler manifold and the direct image sheaf 
$\mathcal{E}:=f_{*}(\mathcal{O}_{X}(K_{X/Y}+L) \otimes \I(h))$ is a locally free on $Y$. 
The locally free sheaf $\mathcal{E}$ is identified with a vector bundle on $Y$ and often written as $E$.
Furthermore, we assume that 
\begin{itemize}
\item $Y$ is a weakly pseudo-convex K\"ahler manifold;  
\item $\omega_Y$ is a K\"ahler form on $Y$; 
\item $\psi$ is a smooth function on $Y$ such that $\theta_\psi:= \theta + \deldel \psi >0$; 
\item $q$ is a positive integer with $1\leq q\leq m:=\dim Y$; 
\item $g\in L^2_{m,q}(Y, E; \omega_Y, He^{-\psi})$  is an $E$-valued $\Dbar $-closed form 
with 
$$
\int_Y \langle B^{-1}_{\omega_Y, \theta_\psi}\, g, g\rangle_{\omega_Y,H}e^{-\psi}\, dV_{\omega_Y}<\infty,
$$ 
where $B_{\omega_Y,\theta_\psi}=[\theta_\psi \otimes \Id_E,\Lambda_{\omega_Y}]$ and 
$H$ is the canonical $L^2$-metric on $E$ 
(see \cite{HPS18} or the proof of Theorem \ref{thm-technical} for the precise definition). 

\end{itemize}
Note that different from Definition \ref{def-dbarNak}, 
we do not assume that $Y$ is Stein nor that $Y$ admits  a trivialization $E \cong Y \times \C^r$. 
\end{setup}

Theorem \ref{thm-technical} is the main technical result of this paper and 
solves the $\Dbar$-equation with optimal $L^2$-estimate under Setting \ref{setup}, 
from which Theorem \ref{thm-main} and Corollary \ref{cor-vanish} can be deduced. 
The assumptions of Setting \ref{setup} appear somewhat technical,
but they are needed to prove Theorem \ref{thm-main} and Corollary \ref{cor-vanish} in a unified way.
The canonical $L^2$-metric $H$ on $E$ is known to be Griffiths $\theta$-semi-positive (see \cite{HPS18}), 
but  we never use this property in the proof of Theorem \ref{thm-technical}.

\begin{theo}\label{thm-technical}
Under Setting \ref{setup}, there exists $u\in L^2_{m,q-1}(Y, E; \omega_Y, He^{-\psi})$ such that 
$$ \Dbar  u=g \quad \text{ and } \quad
\int_Y |u|^2_{\omega_Y,H}e^{-\psi} \,dV_{\omega_Y}\leq \int_Y \langle B^{-1}_{\omega_Y,\theta_\psi}g, g\rangle_{\omega_Y,H}e^{-\psi} \,dV_{\omega_Y}. 
$$
\end{theo}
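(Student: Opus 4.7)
The strategy is to lift the problem from $Y$ to $X$, apply the optimal $L^{2}$-estimate of Lemma \ref{lem-l2correcting} on $X$, and then descend the solution. Set $n=\dim X$ and $k=n-m$. Since $K_{X}=K_{X/Y}+f^{*}K_{Y}$, there is a canonical isomorphism between $E$-valued $(m,q)$-forms on $Y$ and the subspace of $L$-valued $(n,q)$-forms on $X$ whose anti-holomorphic part is entirely ``horizontal'' $($pulled back from $Y$$)$ and whose holomorphic part factors as a ``vertical'' $(k,0)$-form wedge a ``horizontal'' $(m,0)$-form; I will refer to such forms as being of \emph{relative top degree}. Writing $G$ for the lift of $g$ and choosing a fibered K\"ahler form $\omega_{X}=f^{*}\omega_{Y}+\omega_{X,\mathrm{vert}}$ on $X$, the very definition of the canonical $L^{2}$-metric $H$ together with Fubini yields
\[
\int_{Y}|g|^{2}_{\omega_{Y},H}e^{-\psi}\,dV_{\omega_{Y}}=\int_{X}|G|^{2}_{\omega_{X},h}e^{-f^{*}\psi}\,dV_{\omega_{X}}.
\]

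The curvature hypothesis $\ai\Theta_{h}\geq f^{*}\theta$ combined with $\theta_{\psi}>0$ gives $\ai\Theta_{he^{-f^{*}\psi}}\geq f^{*}\theta_{\psi}$ on $X$. The decisive pointwise identity is that, on $L$-valued $(n,q)$-forms of relative top degree, the curvature operator $[f^{*}\theta_{\psi}\otimes\Id_{L},\Lambda_{\omega_{X}}]$ corresponds, through the lifting isomorphism, exactly to $B_{\omega_{Y},\theta_{\psi}}=[\theta_{\psi}\otimes\Id_{E},\Lambda_{\omega_{Y}}]$ acting on $E$-valued $(m,q)$-forms. This is because the $k$ purely vertical holomorphic indices play the role of the abstract ``$E$'' factor, while all anti-holomorphic indices are pulled back from $Y$; in particular the vertical direction of $\omega_{X}$ never enters. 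Consequently
\[
\int_{X}\langle B^{-1}_{\omega_{X},f^{*}\theta_{\psi}}G,G\rangle_{\omega_{X},h}e^{-f^{*}\psi}\,dV_{\omega_{X}}=\int_{Y}\langle B^{-1}_{\omega_{Y},\theta_{\psi}}g,g\rangle_{\omega_{Y},H}e^{-\psi}\,dV_{\omega_{Y}}<\infty,
\]
so $G$ satisfies the hypothesis of the optimal $L^{2}$-estimate on $X$.

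To actually apply Lemma \ref{lem-l2correcting} I need $h$ smooth and a complete K\"ahler form. I would first regularize $h$ by a decreasing sequence of smooth metrics $h_{\nu}$ with $\ai\Theta_{h_{\nu}}\geq f^{*}\theta-\e_{\nu}\omega_{X}$ via the approximation results collected in Appendix \ref{appendix}; then exhaust $Y$ by weakly pseudo-convex sublevel sets $Y_{c}=\{\varphi<c\}$ of a smooth plurisubharmonic exhaustion and exhaust $X$ by $f^{-1}(Y_{c})$, each equipped with a complete K\"ahler form obtained by adding $\idd(\chi(\varphi))$ for a suitable convex increasing $\chi$. Lemma \ref{lem-HermitianDaisyou} guarantees that passing from this auxiliary complete metric back to $\omega_{X}$ only sharpens the estimate, while the correcting term produced by Lemma \ref{lem-l2correcting} vanishes in the limit since no $\lambda$-regularization of the curvature is needed ($B_{\omega_{Y},\theta_{\psi}}$ is already positive definite on forms of positive degree). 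Passing to weak limits as $\nu\to\infty$ and $c\to\infty$ produces an $L$-valued $(n,q-1)$-form $U$ on $X$ with $\Dbar U=G$ and the desired $L^{2}$-bound.

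The remaining step is to descend $U$ to $Y$. Since $G$ is of relative top degree, the orthogonal projection of $U$ onto the subspace of relative-top-degree forms still solves $\Dbar U^{\mathrm{top}}=G$ (the projection removes only contributions with at least one vertical $d\bar z$-index, which lie in the kernel of the relevant $\Dbar$-equation because $G$ itself has none) and has no larger $L^{2}$-norm. Via the lifting isomorphism $U^{\mathrm{top}}$ corresponds to an $E$-valued $(m,q-1)$-form $u$ on $Y$ solving $\Dbar u=g$ with the required estimate; Lemma \ref{lemm-L2} then verifies that the equation extends across the possibly non-trivial locus where $\mathcal{O}_{X}/\I(h)$ is supported. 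The principal obstacle throughout is that $f^{*}\theta$ is degenerate in the vertical direction, so the Nakano operator on $X$ with respect to $\omega_{X}$ is not positive on arbitrary $L$-valued $(n,q)$-forms; the entire argument hinges on the fact that restriction to relative top degree kills precisely this degeneracy and converts the $X$-side Nakano estimate into an estimate matching the $Y$-side hypothesis on the nose.
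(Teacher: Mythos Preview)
Your overall strategy---lift to $X$, solve, descend---matches the paper's, but two load-bearing steps have genuine gaps.

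First, a ``fibered K\"ahler form'' $\omega_X = f^*\omega_Y + \omega_{X,\mathrm{vert}}$ with $\omega_{X,\mathrm{vert}}$ purely vertical and closed does not exist in general, even over the smooth locus of $f$. Without it, your claimed pointwise identity between $[f^*\theta_\psi,\Lambda_{\omega_X}]$ on relative-top-degree forms and $B_{\omega_Y,\theta_\psi}$ on $E$-valued forms fails: a generic K\"ahler form mixes horizontal and vertical directions, and the curvature operator sees this. The paper instead works with the family $\omega_\e = f^*\omega_Y + \e\,\omega_X$ for an \emph{arbitrary} K\"ahler form $\omega_X$; Lemma~\ref{lem-HermitianDaisyou} gives monotonicity in $\e$, and only in the limit $\e\to 0$ does one recover equality with the $Y$-side integral (this is Claim~\ref{claim1} and equation~\eqref{eq-1}).

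Second, and more seriously, the descent by orthogonal projection does not work as stated. Write $U = U^{\mathrm{top}} + U'$ with $U'$ collecting all terms carrying at least one $d\bar z_i$. You correctly observe that every term of $\Dbar U'$ also carries a $d\bar z$, so the horizontal component of $\Dbar U^{\mathrm{top}}$ equals $G$. But $\Dbar U^{\mathrm{top}}$ itself is \emph{not} purely horizontal: it has a vertical piece $\Dbar_z U^{\mathrm{top}}$, and nothing in your argument forces this to vanish. Equivalently, the coefficients of $U^{\mathrm{top}}$ need not be holomorphic along the fibers, so $U^{\mathrm{top}}$ cannot be identified with an $E$-valued form on $Y$ (sections of $E$ are by definition fiberwise holomorphic). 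The equation $\Dbar U=G$ only says that $\Dbar_z U^{\mathrm{top}}$ is cancelled by contributions from $U'$, not that it is zero.

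The paper's device for forcing fiber-holomorphicity is precisely the $\e$-family: one solves $\Dbar\widetilde u = \widetilde g$ with an $L^2$-bound with respect to $\omega_\e$ that is \emph{uniform in $\e$} (Claim~\ref{claim2}). Since $|d\bar z_I|^2_{\omega_\e}\, dV_{\omega_\e}$ behaves like $\e^{-|I|}$ as $\e\to 0$, any nonzero component of $\widetilde u$ carrying a vertical $d\bar z$ would violate the uniform bound; this is the content of Step~\ref{step4}. Once $\widetilde u$ is known to have only horizontal $d\bar t$'s, the $d\bar z$-component of $\Dbar\widetilde u = \widetilde g$ then \emph{forces} $\Dbar_z$ of each coefficient to vanish, and the descent goes through.

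Two smaller points. The approximation you need for $h$ is Demailly's regularization of quasi-psh functions \cite{Dem92}, not the Griffiths-semi-negative vector-bundle statements of Appendix~\ref{appendix}. And a correcting term \emph{is} required on $X$: the operator $[f^*\theta_\psi,\Lambda_{\omega_\e}]$ is only semi-definite (it degenerates in the vertical direction), so Lemma~\ref{lem-simplel2correcting} must be invoked with $\delta>0$ and the correcting term disposed of in the limit $\delta\to 0$, exactly as in Step~\ref{step3}.
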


\begin{proof}
The proof can be divided into four steps.

\begin{step}[Strategy of the proof]\label{step1}
The purpose of this step is to explain the notation and the strategy of the proof. 

Let $(t_1,\ldots,t_m)$  be a  coordinate on an open subset $U \subset Y$. 
Then, the $E$-valued $(m,q)$-form $g$ can be locally written as 
$$
g=\sum_{|J|=q} g_J \,dt\wedge d \overline{t}_J 
\text{ and } g_J = \sum_{i=1}^{r} g_{J,i} e_{i}, 
$$
where \begin{itemize}
\item[$\bullet$] $J$ runs through the multi-indices of degree $q$;  
\item[$\bullet$] $dt:=\,dt_1\wedge\cdots\wedge \,dt_m$ and 
$d\overline{t}_J:=d\overline{t}_{j_{1}}\wedge\cdots\wedge d\overline{t}_{j_{q}}$ for $J=(j_{1}, j_{2}, \cdots, j_{q})$; 
\item[$\bullet$] $g_J$ is a section of $\mathcal{E}|_{U}$; 
\item[$\bullet$] $\{e_{i}\}_{i=1}^{r}$ is a frame of $\mathcal{E}|_{U}$ and $r:=\rank \mathcal{E}$; 
\item[$\bullet$] $g_{J,i}=g_{J,i}(t)$ is a function defined on $U\subset Y$.  
\end{itemize}
The section $g_J$ can be regarded as a 
section of $K_{X/Y}\otimes L \otimes \mathcal{I}(h)$ on the inverse image $f^{-1}(U) \subset X$ 
by the definition of direct image sheaves,  
and thus $g_J \otimes f^{*}dt$ can be identified with a section of $K_{X}\otimes L \otimes \mathcal{I}(h)$ 
since $f^{*}dt$ is the local frame of $f^{*}K_{Y}$. 
Under this identification, 
the $L$-valued $(n+m,q)$-form $\widetilde{g}$ is defined as follows: 
\begin{align}\label{eq-defg}
\widetilde{g}:=\sum_{|J|=q}  \big(g_J \otimes  f^{*}dt \big) \wedge f^{*} d\overline{t}_J 
\end{align}
where $n$ is the fiber dimension of $f \colon X \to Y$. 
Although $\widetilde{g}$ is defined by a local description of $g$, 
we can easily check that $\widetilde{g}$ is globally defined on $X$ 
as an $L$-valued $(n+m,q)$-form.

The strategy of the proof is as follows: 
In Step \ref{step2}, we show that $\widetilde{g}$ is $\Dbar$-closed 
and $L^2$-integrable in a suitable sense, by comparing $\widetilde{g}$ to $g$. 
In Step \ref{step3}, we solve the $\Dbar$-equation $\Dbar \widetilde{u} = \widetilde{g}$ on $X$, 
where after we apply Demailly's approximation theorem to $h$ and  
the result for a ``twisted'' $\Dbar$-equation, 
we take the limit to obtain the desired solution  $\widetilde{u}$ 
focusing the positivity in horizontal direction. 
In Step \ref{step4}, we finally descend $\widetilde{u}$ to an $E$-valued $(m,q-1)$-form  $u$  on $Y$ 
and confirm that $u$ satisfies the desired properties in Theorem \ref{thm-technical}, 
where the uniform $L^2$-estimate obtained in Step \ref{step3} plays a crucial role. 
\end{step}

\begin{step}[$L^{2}$-integrability and $\Dbar$-closedness of $\tilde{g}$]\label{step2}

For a fixed K\"ahler form $\omega_{X}$ on $X$, 
we  define  
the K\"ahler form $\omega_{\e}$ on $X$ by 
$$
\omega_{\e}:=f^{*}\omega_{Y}+\e \omega_{X}. 
$$ 
The purpose of this step is to prove Claim \ref{claim1}. 
To this end, we investigate the construction of  $\widetilde{g}$ and 
the canonical $L^2$-metric $H$ in detail. 

\begin{claim}\label{claim1}
Under the same situation as above, we have$:$
\begin{itemize}
\item[(1)] $\widetilde{g}$ is a  $\Dbar$-closed  and 
$L\otimes \mathcal{I}(h)$-valued  $(n+m, q)$ on $X$$;$ 

\item[(2)] For a positive integer $\e>0$, we have
\begin{align*}
 \int_{X} \langle B^{-1}_{\omega_\varepsilon, f^* \theta_\psi} \widetilde{g},\widetilde{g}\rangle_{\omega_\varepsilon, h}e^{-f^*\psi} \,dV_{\omega_\varepsilon} 
\leq  \int_Y \langle B^{-1}_{\omega_Y, \theta_\psi}g,g\rangle_{\omega_Y,H}e^{-\psi}\,dV_{\omega_Y} < \infty, 
\end{align*}
where $B_{\omega_\varepsilon, f^*\theta_\psi}:=[f^*\theta_\psi\otimes\Id_L,\Lambda_{\omega_\varepsilon}]$. 
\end{itemize}
\end{claim}

\begin{proof}
Following \cite[Section\,22]{HPS18}, we first recall the definition of canonical $L^2$-metrics. 
We take a Zariski open subset  $Y_0 \subset Y$ such that 
\begin{itemize}
\item[$\bullet$] $f \colon X_0:=f^{-1}(Y_0) \to Y_0$ is a smooth fibration over $Y_0$; 
\item[$\bullet$] The quotient sheaf $f_{*}(\mathcal{O}_{X}(K_{X/Y}+L))/\mathcal{E}$ is locally free on $Y_0$; 
\item[$\bullet$] $\mathcal{E} $ has the base change property. 
\end{itemize}
Then, for any $y \in Y_0$, we have the inclusions
$$
H^0(X_y, \mathcal{O}_{X_y} (K_{X_y}+L) \otimes \mathcal{I}(h|_{X_y}))
\subset 
E_y
\subset 
H^0(X_y, \mathcal{O}_{X_y}(K_{X_y}+L)) 
$$
under the natural identification $K_{X/Y} |_{X_y} \cong K_{X_y}$. 
Furthermore, the norm $|\alpha|_H$ of $\alpha \in E_y$ can be described by the fiber integral: 
$$
|\alpha|^2_H :=\int_{X_y} |\alpha|^2_{\omega_{X_y}, h} \, d V_{\omega_{X_y}}
$$
where $X_y$ is the fiber at $y \in Y_0$ and $\omega_{X_y}:={\omega_X}|_{X_y}$. 
By the property of $(n,0)$-forms,
the integrand of the right-hand side can be written as 
$$
|\alpha|^2_{\omega_{X_y}, h} \, d V_{\omega_{X_y}} = 
|\alpha'|^2_{h}  \,c_n dz \wedge d\overline{z}, 
$$  
where $\alpha=\alpha'\,dz$, $c_n:=\sqrt{-1}^{n^2}$ and $z$ is a local coordinate of the fiber $X_y$. 
In particular, the integrand does  not depend on the choice of K\"ahler forms on $X_y$. 

(1) It is obvious that $\widetilde{g}$ is 
an $L\otimes \mathcal{I}(h)$-valued $(n+m, q)$-form by construction (see \eqref{eq-defg}). 
We now show that $\Dbar \widetilde{g}=0$ on $X_{0}$. 
Since $f \colon X \to Y$ is a smooth fibration at $y_0 \in Y_0$, 
we can take a local coordinate $(z_{1}, \ldots, z_{n})$ 
of the smooth fiber $f^{-1}(y_0)$  
so that  
$$
(z_1,\cdots,z_n, f^*t_1,\cdots,f^*t_m)=(z_1,\cdots,z_n, t_1,\cdots,t_m)
$$ 
gives a local coordinate of $X$. 
Throughout the proof, we denote $f^*t_j$ by $t_j$ when considering it as part of the coordinates. 
However, we retain the notation $f^*t_j$ when emphasizing that it is the pullback of $t_j$.
The $L$-valued $(n+m,0)$-form $ g_J \otimes  f^{*}dt$ 
can be written as 
$$
g_J \otimes  f^{*}dt =\sum_{i=1}^r g_{J,i}  \,e_{i} \otimes  f^{*}dt 
= G_J\,dz \wedge \,dt  
$$
where $G_J=G_J(t,z)$ is a local section of $L$. 
Hence, we obtain 
$$
\widetilde{g}=\sum_{|J|=q}  G_{J} \,dz \wedge \,dt  \wedge d\overline{t}_{J}. 
$$
The function $g_{J,i}=g_{J,i}(t)$ does not depend on the fiber coordinate $z$ and 
$e_i$ is a holomorphic section, 
and thus we can deduce that $\Dbar_{z} G_J=0$ holds, which indicates that $\Dbar_z \widetilde{g}=0$. 
Furthermore, 
since we have 
 $$ 
\sum_{|J|=q}
\sum_{i=1}^r 
\sum_{k=1}^m
 \frac{\partial g_{J,i}}{\partial \overline{t}_k } e_i \otimes \,d \overline{t}_k \wedge d \overline{t}_J =0 
$$
by the assumption $\Dbar g=0$, 
we can see that 
\begin{align*}
\Dbar_{t} \widetilde{g} &= \sum_{|J|=q} \sum_{k=1}^m  \frac{\partial G_{J}(t,z)}{\partial \overline{t}_k} \,d \overline{t}_k \wedge dz \wedge \,dt  \wedge d\overline{t}_{J}\\
&= \sum_{|J|=q} \sum_{k=1}^m \sum_{i=1}^r  \frac{\partial g_{J,i}}{\partial \overline{t}_k } \,e_i 
\otimes  d \overline{t}_k \wedge dz \wedge dt  \wedge d \overline{t}_J =0. 
\end{align*}
Hence, we obtain $\Dbar \widetilde{g}=0$ on $X_0$. 
The property of $\Dbar \widetilde{g}=0$ can be extended to the ambient space $X$ by the (local) $L^2$-integrability of $\widetilde{g}$ 
(see \cite[Lemma 6.9]{Dem82}). 
This finishes the proof of (1).

(2) 
It is sufficient to prove the desired inequality for the integrals 
after we replace $f \colon X \to Y $ with $f \colon X_0 \to Y_0$. 
Let $\{\lambda_j\}_{j=1}^{m}$ be eigenvalues of $\theta_\psi$ with respect to $\omega_Y$. 
We take a coordinate $(t_1,\ldots,t_m,z_1,\ldots,z_n)$ around a fixed point $x_0\in X$ with $f(x_0)=y_0$ such that 
\begin{align*}
\theta_\psi=\sqrt{-1} \sum^m_{j=1}\lambda_j\,dt_j\wedge d\overline{t}_j  
\quad \text{ and } \quad 
\omega_{Y}=\sqrt{-1} \sum^m_{j=1}\,dt_j\wedge d\overline{t}_j  \quad \text{ at } y_0. 
\end{align*}
We first show that  the integrand $I_{\e}$ of the left-hand side in Claim \ref{claim1} (2),  
defined by 
$$
I_{\e}:=
 \langle B^{-1}_{\omega_\varepsilon, f^* \theta_\psi} \widetilde{g},\widetilde{g}\rangle_{\omega_\varepsilon, h}e^{-f^*\psi}\,dV_{\omega_\varepsilon}, 
$$
converges to $I$ monotonically from below, 
where 
$$
I:=\sum_{|J|=q} \Bigl(f^* \sum_{j\in J} \lambda_j \Bigr)^{-1}  
 \cdot  f^{*}|d\overline{t}_J|^2_{\omega_Y}  \cdot  
|G_J |_{h}^{2}  \cdot  e^{-f^*\psi} \, c_{n+m} dz \wedge \,dt \wedge 
d\overline{z} \wedge \,d\overline{t}. 
$$
Note that $I$ is a priori defined by local coordinates at each point $y_0$, 
but $I$ is independent of this choice of the coordinates and globally defined,
since $I$ is the limit of the globally defined $I_{\varepsilon}$.
 To this end, we consider the K\"ahler form $\omega_{\st}$ locally defined around  
$x_{0} \in X_{y_0}$: 
$$
\omega_{\st}:=\sum_{i=1}^{m} \sqrt{-1}\,dt_{i} \wedge d\overline{t}_{i} + 
\sum_{j=1}^{n}\sqrt{-1}\, dz_{j} \wedge d\overline{z}_{j}
$$ 
and take a positive constant $C>0$ so that 
$(1/C)\omega_{\st} \leq \omega_{X} \leq C \omega_{\st}$. 
Lemma \ref{lem-HermitianDaisyou} enables us to compare the  norms with respect to $\omega_\e$, 
$f^*\omega_Y+C\e \omega_{\st} $, and $f^*\omega_Y+(1/C)\e \omega_{\st} $. 
This implies that 
it is sufficient to show that $I_{\e,r}$ converges to $I$, 
where $r>0$, $\omega_{\e,r}:=f^*\omega_Y+\e r \omega_{\mathrm{st}}$ and 
\[
I_{\e,r}:=\langle B^{-1}_{\omega_{\varepsilon, r}, f^* \theta_\psi} \widetilde{g},\widetilde{g}\rangle_{\omega_{\e,r}, h}e^{-f^*\psi}\,dV_{\omega_{\e,r}}.
\]
A straightforward calculation yields 
$$
B^{-1}_{\omega_{\varepsilon, r}, f^* \theta_\psi} \widetilde{g}
=\sum_{|J|=q}\Bigl(\sum_{j\in J}  \frac{f^*\lambda_j }{\sqrt{1+\e r}} \Bigr)^{-1} G_J\, dz \wedge dt \wedge d\overline{t}_J  \quad \text{ at } y_0, 
$$
and thus, we can see that 
\begin{align*}
I_{\e,r}=\sum_{|J|=q} \Bigl(\sum_{j\in J}  \frac{f^*\lambda_j }{\sqrt{1+\e r}} \Bigr)^{-1} \cdot 
 |f^*d\overline{t}_{J}|^2_{\omega_{\e, r}} \cdot 
|G_J |_{h}^{2}e^{-f^*\psi} \, c_{n+m} dz \wedge \,dt \wedge d\overline{z} \wedge \,d\overline{t} \quad \text{ at } y_0. 
\end{align*}
Since $|f^*d\overline{t}_{J}|^2_{\omega_{\e, r}}$ and  $f^{*}\lambda_j/ \sqrt{1+\e r}$ 
converges to $f^{*}|d\overline{t}_{J}|^2_{\omega_Y}$ and $f^{*}\lambda_j$ respectively. 
Hence, Lemma \ref{lem-HermitianDaisyou} shows that  $I_{\e}$ converges to $I$ monotonically from below.

By the definition of the canonical $L^2$-metric $H$ on $\mathcal{E}$, 
we can see that 
\begin{align}\label{eq-defnorm}
| g_{J} \,dt \wedge d\overline{t}_{J}|^2_{\omega_Y,H}
= |\,d\overline{t}_{J}|^{2}_{\omega_{Y}} \cdot |\,dt| ^{2}_{\omega_{Y}} \cdot \int_{X_{y}}  |G_{J}|^{2}_{h}  
\, c_{n} dz \wedge d\overline{z}.
\end{align}
Then, due to the same calculation as above, we can see that 
$$
B_{\omega_Y,\theta_\psi}^{-1}\,g=\sum_{|J|=q}(\sum_{j\in J}\lambda_j)^{-1} g_J\,dt \wedge d\overline{t}_J  \quad \text{ at } y_0. 
$$
Together with \eqref{eq-defnorm}, we obtain that 
\begin{align*}
&\langle B_{\omega_Y,\theta_\psi}^{-1}\,g,g\rangle_{\omega_Y,H}e^{-\psi}\,dV_{\omega_Y}\\
=& \Big(
\sum_{|J|=q} 
(\sum_{j\in J}\lambda_j )^{-1} \cdot   |\,d\overline{t}_{J}|^{2}_{\omega_{Y}}  \cdot  e^{-\psi}  \cdot 
\int_{X_{y}}  |G_{J}|^{2}_{h}  \, c_{n} dz \wedge d\overline{z} \Big) \cdot c_{m} \,dt \wedge d\overline{t}
\\
=& \int_{X_{y}} \Bigl(
\sum_{|J|=q} (f^* \sum_{j\in J} \lambda_j )^{-1} \cdot   f^{*}|d\overline{t}_{J}|^2_{\omega_Y} \cdot  
|G_J |_{h}^{2} e^{-f^*\psi} 
 \, c_{n} dz \wedge d\overline{z} \Bigr)
\cdot c_{m} dt \wedge d\overline{t}  \quad \text{ at } y_0.
\end{align*}
Hence, Fubini's theorem shows that 
\begin{align}\label{eq-1}
&\int_Y\langle B^{-1}_{\omega_Y,\theta_\psi}g,g\rangle_{\omega_Y,H}e^{-\psi}\,dV_{\omega_Y}
 \\
=&\int_{X}\Bigl(
\sum_{|J|=q} (f^* \sum_{j\in J} \lambda_j )^{-1} \cdot   f^{*}|d\overline{t}_{J}|^2_{\omega_Y} \cdot  
|G_J |_{h}^{2} e^{-f^*\psi} 
 \, c_{n} dz \wedge d\overline{z} \Bigr)\cdot c_{m} dt \wedge d\overline{t}.
 \notag
\end{align}
The desired conclusion follows from Lebesgue's monotone convergence theorem 
since $I_{\e}$ converges to the integrand in the right-hand side monotonically from below.
\end{proof}

\end{step}

\begin{step}[$\Dbar$-equations on $X$ with optimal $L^2$-estimate]\label{step3}

The purpose of this step is to prove Claim \ref{claim2}. 
To this end, we apply Demailly's approximation theorem to $h$, 
and then take the limit of appropriate  solutions of ``twisted'' $\Dbar$-equations.

\begin{claim}\label{claim2}
Under the same situation as above, there exists $\widetilde{u} \in L^2_{n+m,q-1}(X, L; \omega_X, h e^{-f^*\psi})$ 
such that $\Dbar\widetilde{u}=\widetilde{g}$ on $X$ and 
\begin{align*}
\int_{X}|\widetilde{u}|^2_{\omega_\varepsilon,h}e^{-f^*\psi}\,dV_{\omega_{\varepsilon}}
\leq\int_Y \langle B^{-1}_{\omega_Y,\theta_\psi}g,g\rangle_{\omega_Y,H}e^{-\psi}\,dV_{\omega_Y}
\end{align*}
for any $\e>0$. 
\end{claim}
\begin{proof}
Let \(\{Y_j\}_{j=1}^{\infty}\) be an open cover of \(Y\) such that 
$Y_{j}\Subset Y_{j+1}$ and \(Y_j \Subset Y \) hold for any $j$. 
We may assume that $Y_j$ is a sub-level set of an exhaustive psh function on $Y$ 
since $Y$ is weakly pseudo-convex. 
The inverse image \(X_j = f^{-1}(Y_j)\) also satisfies that $X_{j}\Subset X_{j+1}$ and \({X}_j \Subset X \).
By applying Demailly's approximation theorem \cite{Dem92} to $h|_{X_{j}}$ and $\omega_\e|_{X_j}$, 
we can take a family $\{h_{j, \e, \delta}\}_{\delta>0}$ of singular Hermitian metrics on $L|_{X_{j}}$
such that 
\begin{itemize}
\item [$(1)$] $h_{j, \e, \delta}$ has analytic singularities along a Zariski closed subset $Z_{j,\e, \delta}  \subset X_j$;  
\item [$(2)$] $\{h_{j, \e, \delta}\}_{\delta>0}$ increasingly converges to $h|_{X_{j}}$ as $\delta \searrow 0$; 
\item [$(3)$] $\sqrt{-1}\Theta_{h_{j, \e, \delta}}(L) \geq f^*\theta - \delta \omega_\e $ holds on $X_{j}$. 
\end{itemize}
Note that Demailly's approximation theorem remains valid for the relatively compact subset $X_{j} \Subset X$ 
(see \cite[Theorem 2.9]{Mat22} for the detailed argument). 
We consider 
the curvature operator $B_{j, \e, \delta}$ defined by 
$$
B_{j,\e,\delta}:=[\sqrt{-1} \Theta_{h_{j, \e, \delta}}(L) + f^{*} \idd \psi,\Lambda_{\omega_\varepsilon}] 
\text{ on } X_{j} \setminus Z_{j,\e, \delta}. 
$$ 
Then, we can easily check that  
\begin{align*}
B_{j, \e, \delta}+2\delta [\omega_\varepsilon, \Lambda_{\omega_\varepsilon}] 
>&B_{j, \e, \delta}+\delta[\omega_\varepsilon, \Lambda_{\omega_\varepsilon}] \\
\geq&[f^*\theta_\psi \otimes\Id_L,\Lambda_{\omega_\varepsilon}]
=B_{\omega_\varepsilon, f^* \theta_\psi }\geq0. 
\end{align*}
Note that $[\omega_\varepsilon, \Lambda_{\omega_\varepsilon}]=q\cdot\Id_L$ holds 
on $\wedge^{n+m,q}T^*X\otimes L$.
Thus, together with Claim \ref{claim1}, we can  deduce that 
\begin{align*}\label{eq-ineq}
& \int_{X_j\setminus Z_{j,\e, \delta}}\langle(B_{j, \e, \delta}+2q\delta\cdot\Id_L)^{-1}\widetilde{g},\widetilde{g}\rangle_{\omega_\varepsilon, h_{j, \e, \delta}}e^{-f^*\psi}\,dV_{\omega_\varepsilon} \notag \\
 \leq &  
 \int_{X} \langle B^{-1}_{\omega_\varepsilon, f^* \theta_\psi } \widetilde{g},\widetilde{g}\rangle_{\omega_\varepsilon, h}e^{-f^*\psi}\,dV_{\omega_\varepsilon} \notag  \\
\leq &  \int_Y\langle B^{-1}_{\omega_Y, \theta_\psi}g,g\rangle_{\omega_Y,H}e^{-\psi}\,dV_{\omega_Y}.
\end{align*}
Note that  $X_{j} \setminus Z_{j,\e, \delta}$ is a complete K\"ahler manifold 
by \cite[Theorem\,1.5]{Dem82} since $X_j$ is a sub-level set of an exhaustive psh function. 
By applying Lemma \ref{lem-simplel2correcting}, %
we can find an approximate solution $\widetilde{u}_{j, \e, \delta}$ and a correcting term $\tau_{j, \e, \delta}$: 
\begin{align*}
&\widetilde{u}_{j, \e, \delta}\in L^2_{n+n,q-1}(X_j\setminus Z_{j,\e, \delta},L;\omega_\varepsilon,h_{j, \e, \delta} e^{-f^*\psi}) \text{ and } \\
&\tau_{j, \e, \delta}\in L^2_{n+m,q-1}(X_j\setminus Z_{j,\e, \delta},L;\omega_\varepsilon,h_{j, \e, \delta} e^{-f^*\psi})
\end{align*}
satisfying that
\begin{align*}
\Dbar \widetilde{u}_{j, \e, \delta}+\sqrt{2q\delta }\,\tau_{j, \e, \delta}&=\widetilde{g} \,\,\text{on}\,\,X_j\setminus Z_{j,\e, \delta} \quad\text{and}\\
\int_{X_j\setminus Z_{j,\e, \delta}}|\widetilde{u}_{j,\e, \delta}|^2_{\omega_\varepsilon,h_{j, \e, \delta}}e^{-f^*\psi}\,dV_{\omega_\varepsilon}&+\int_{X_j\setminus Z_{j,\e, \delta}}|\tau_{j, \e, \delta}|^2_{\omega_\varepsilon,h_{j, \e, \delta}}e^{-f^*\psi}\,dV_{\omega_\varepsilon}\\
&\leq\int_Y\langle B^{-1}_{\omega_Y,\theta_\psi}g,g\rangle_{\omega_Y,H}e^{-\psi}\,dV_{\omega_Y}.
\end{align*}
Note that the right-hand side does not depend on the data $j, \e, \delta$. 
The first property of $\Dbar $-equation on $X_j\setminus Z_{j,\e, \delta}$ can be extended to $X_j$ 
by the (local) $L^2$-integrability (see \cite[Lemma 6.9]{Dem82}). 
Hereafter, we regard both $\widetilde{u}_{j, \e, \delta}$ and $\tau_{j, \e, \delta}$ 
as $L^{2}$-sections on the ambient space $X$ via the zero extensions. 
We will take an appropriate weak limit to construct the desired solution $\widetilde{u}$.

We first fix $j$, $\e$, and consider a weak limit as $\delta \to 0$. 
The $L^{2}$-norms $\|\widetilde{u}_{j, \e, \delta} \|_{\omega_\varepsilon,h_{j, \e, \delta}}$ 
and $\|\tau_{j, \e, \delta}\|_{\omega_\varepsilon, h_{j, \e, \delta}}$ are uniformly bounded  in $\delta$ 
and $\{h_{j, \e, \delta}\}_{\delta>0}$ monotonically increases as $\delta \searrow 0$. 
Thus, by the same argument as in \cite[Proof of Theorem 1.2]{IM24} (see also \cite{GMY23, Mat18, Mat22}), 
we can take a subsequence of $\{\widetilde{u}_{j, \e, \delta}\}_{\delta>0}$ 
(for which we use the same notation for simplicity) and 
$$
\widetilde{u}_{j, \e}\in L^2_{n+m,q-1}(X_j, L;\omega_\varepsilon, h e^{-f^*\psi})
$$ 
such that 
$\widetilde{u}_{j, \e, \delta}$ weakly converges to $\widetilde{u}_{j, \e} $ 
on $L^2_{n,q-1}(X, L; \omega_\varepsilon, h_{\delta_{0}} e^{-f^*\psi}) $ 
for a positive number $\delta_{0}>0$.  
Note that this subsequence and its weak limit is independent of $\delta_0>0$ by the diagonal argument. 
Furthermore,  by the lower semi-continuity of  the weak limit and property (2) of $\{h_{j, \e, \delta}\}_{\delta>0}$, 
we can see that for $\delta_0 >0$
\begin{align*}
\int_{X_j}|\widetilde{u}_{j, \e}|^2_{\omega_\varepsilon,h_{j, \e, \delta_0 }}
e^{-f^*\psi}\,dV_{\omega_\varepsilon}
&\leq \liminf_{\delta \to 0}
\int_{X_j}|\widetilde{u}_{j, \e, \delta}|^2_{\omega_\varepsilon,h_{j, \e, \delta_0 }}
e^{-f^*\psi}
\,dV_{\omega_\varepsilon}\\
&\leq \liminf_{\delta \to 0}
\int_{X_j}|\widetilde{u}_{j, \e, \delta}|^2_{\omega_\varepsilon,h_{j, \e, \delta}}e^{-f^*\psi}
\,dV_{\omega_\varepsilon}\\
&\leq\int_Y\langle B^{-1}_{\omega_Y,\theta_\psi}g,g\rangle_{\omega_Y,H}e^{-\psi}\,dV_{\omega_Y}.
\end{align*}
By applying  Fatou's lemma, we obtain that 
\begin{align*}
\int_{X_j}|\widetilde{u}_{j, \e}|^2_{\omega_\varepsilon,h} e^{-f^*\psi}\,dV_{\omega_\varepsilon}
&\leq \liminf_{\delta_{0} \to 0}
\int_{X_j}|\widetilde{u}_{j, \e}|^2_{\omega_\varepsilon,h_{j, \e, \delta_{0}}}e^{-f^*\psi}\,dV_{\omega_\varepsilon}\\
&\leq\int_Y\langle B^{-1}_{\omega_Y,\theta_\psi}g, g\rangle_{\omega_Y,H}e^{-\psi}\,dV_{\omega_Y}.
\end{align*}

On the other hand, in  the same way applied to $\{\tau_{j, \e, \delta}\}_\delta$, 
we may assume that $\{\tau_{j, \e, \delta}\}_{\delta>0}$ weakly converges to some $\tau_{j, \e}$. 
Then
it follows that 
$\sqrt{2q\delta}\,\tau_{j, \delta, \e}$ weakly converges to zero 
by $\delta  \to 0$. 
Consequently, by taking the weak limit as $\delta \to 0$, we obtain that 
\begin{align*}
\Dbar\widetilde{u}_{j,\e}&=\widetilde{g}\,\,\text{on}\,\,X_j\quad \text{and}\\
\int_{X_j}|\widetilde{u}_{j,\e}|^2_{\omega_\varepsilon,h}e^{-f^*\psi}\,dV_{\omega_\varepsilon}&\leq\int_Y\langle B^{-1}_{\omega_Y,\theta_\psi}g,g\rangle_{\omega_Y,H}e^{-\psi}\,dV_{\omega_Y}.
\end{align*}

We now fix $\e$, and consider a weak limit as $j \to \infty$. 
The right-hand side does not depend on $j$ and 
the norm $\int_{X_j}|\bullet|^2_{\omega_\varepsilon,h}e^{-f^*\psi}\,dV_{\omega_\varepsilon}$
monotonically increases as $j \nearrow \infty$. 
Therefore, for a fixed positive number $\e>0$, the same argument as above works for $\{\widetilde{u}_{j, \e}\}_{j=1}^\infty$. 
Hence, the subsequence $\{\widetilde{u}_{j, \e}\}_{j=1}^\infty$  
(for which we use the same notation)
weakly converges to some $\widetilde{u}_{\e} \in L^2_{n,q-1}(X,L;\omega_\varepsilon,h e^{-f^*\psi})$ 
and the weak limit $\widetilde{u}_{\e}$  satisfies that 
\begin{align*}
\Dbar\widetilde{u}_{\e}&=\widetilde{g}\,\,\text{on}\,\,X_j\quad \text{and}\\
\int_{X}|\widetilde{u}_{\e}|^2_{\omega_\varepsilon,h}e^{-\psi}
\,dV_{\omega_\varepsilon}&\leq\int_Y \langle B^{-1}_{\omega_Y,\theta_\psi}g,g\rangle_{\omega_Y,H}e^{-\psi}\,dV_{\omega_Y}.
\end{align*}
By noting that that $|w|^2_{\omega_\e} \,dV_{\omega_\e}$ 
monotonically increases as $\e \searrow 0$ for any $(n+m, q)$-form $w$, 
we can apply the same argument as above again, which yields the desired solution $\widetilde{u}$. 
\end{proof}
\end{step}

\begin{step}[Descent of $\widetilde{u}$ to $Y$]\label{step4}
The purpose of this step is to show that $\widetilde{u}$ determines an $E$-valued $(m,q-1)$ form $u$ on $Y$ 
with the desired properties in Theorem \ref{thm-technical} to complete the proof. 
To this end,  from the uniform $L^{2}$-estimate in Claim \ref{claim2}, 
we deduce that $\widetilde{u}$ can be locally written
\begin{align*}
\widetilde{u}=\sum_{|K|=q-1}U_K\, dz\wedge \,dt\wedge d\overline{t}_K 
\end{align*}
on  $X_{0}=f^{-1}(Y_{0})$, where $U_K$ is a local section of $E$, 
in other words, the term containing \( d\overline{z}_i \) never appear in the local expression. 
To derive a contradiction, we assume that 
$\widetilde{u}$ contains the term 
$U dz\wedge \,dt\wedge d\overline{z}_I \wedge d\overline{t}_L $, 
where $I \not = \emptyset$ and $L$ are multi-indices. 
We take the K\"ahler form $\omega_{\st}$ locally defined on $X_0$ as in Step \ref{step2},  
and the K\"ahler form $\omega_{\st, Y}:=\sqrt{-1}\sum_{i=1}^{m} \,dt_{i}\wedge d\overline{t}_{i} $ 
locally defined on $Y_0$. 
Fix a positive constant $C>0$ so that 
$\omega_{\e} \leq \omega'_{\e}:=C (f^{*}\omega_{\st, Y}+ \e \omega_{\st})$ 
holds. 
Then, by noting that $dt_{j}$ and $dz_{k}$ are orthogonal  with respect to the new metric $\omega'_{\e}$, 
we can see that 
\begin{align*}
|\widetilde{u}|^2_{h, \omega_\varepsilon}\,dV_{\omega_\varepsilon}
&\geq|\widetilde{u}|^2_{h,\omega'_\varepsilon}\,dV_{\omega'_\varepsilon}\\
&\geq|U \, dz\wedge  dt \wedge d\overline{z}_I \wedge d\overline{t}_L|^2_{h, \omega'_\varepsilon}\,dV_{\omega'_\varepsilon}\\
&=|U|_{h}^2 
|d\overline{z}_I|^2_{\omega'_\varepsilon} |d\overline{t}_L|^2_{\omega'_\varepsilon} 
 |dz\wedge \,dt|^2_{\omega_X}\,dV_{\omega_X}
\end{align*}
on an open set in $X_0$
Furthermore, by the definition of $\omega'_{\e}$, 
we can see that $|d\overline{z}_I|^2_{\omega'_\varepsilon} = 1/(C\e)^{|I|}$ 
and $|d\overline{t}_L|^2_{\omega'_\varepsilon} =1/(C(1+\e))^{|L|}$. 
This indicate that the integral of $|\widetilde{u}|^2_{h, \omega_\varepsilon}\,dV_{\omega_\varepsilon}$ 
is not uniformly bounded in $\e$, which contradicts Claim \ref{claim2}.

By $\Dbar \widetilde{u} = \widetilde{g}$ and the above local expression, 
we obtain that 
$$
\sum_{|K|=q-1} \big( \sum_{j=1}^{m}  \frac{\partial U_{K}}{\partial \overline t_{j}} \, d \overline{t}_{j} +
\sum_{k=1}^{n}  \frac{\partial U_{K}}{\partial \overline z_{k}}\, d \overline{z}_{k}
 \big) \wedge d\overline{t}_{K} = 
\sum_{|J|=q}  G_{J}   \,  d\overline{t}_{J}. 
$$
By considering the term containing $d \overline{z}_{k}$, 
we can see  that $U_K$ is holomorphic in the fiber coordinate $z_i$, 
which implies that $u_K(t):=U_K(t,\bullet)\,dz\in H^0(X_t,K_{X_t}\otimes L|_{X_t})$
Thus, we can conclude that 
$\widetilde{u}$  can be determined by the $E$-valued $(m,q-1)$-form $u$ on $Y_{0}$ defined by 
$$u:=\sum_{|K|=q-1}u_K(t)\,dt\wedge d\overline{t}_K.$$
Furthermore, by considering the term containing $d \overline{t}_{k}$, 
we can see that $\Dbar u= g$ holds on $Y_{0}$. 

We now consider the $L^2$-norm of $u$. 
Note that the norm $|d\overline{t}_{K}|^{2}_{\omega_{\e}} $ 
converges to $f^{*}|d\overline{t}_{K}|^{2}_{\omega_{Y}}$ by the same calculation as in Step \ref{step2} and 
$$
|\widetilde{u}|^2_{\omega_\e,h}\,dV_{\omega_\e}\nearrow \sum_{|K|=q-1} f^{*}|d\overline{t}_{K}|^{2}_{\omega_{Y}} \cdot |U_K|^2_h\, c_{n+m}dt\wedge dz\wedge d\bar{t} \wedge d\bar{z}
$$
as $\e\searrow 0$. 
Hence, by Fatou's lemma and the argument  in Step \ref{step2} based on Fubini's theorem, 
we can see that 
\begin{align*}
&\liminf_{\e \to 0}
\int_{X_{0}}|\widetilde{u}|^2_{\omega_{\e},h} \cdot  e^{-f^*\psi}\,dV_{\omega_{\e}} \\
\geq &
\int_{X_{0}} \sum_{|K|=q-1} 
f^{*}|d\overline{t}_{K}|^{2}_{\omega_{Y}}\cdot  |U_{K}|^{2}_{h} \cdot  e^{-f^*\psi}
\, c_{n+m} dz\wedge dt \wedge d\overline{z}\wedge \,d\overline{t}\\
=&
\int_{y\in Y_{0}} \sum_{|K|=q-1} |d\overline{t}_{K}|^{2}_{\omega_{Y}} e^{-\psi}
 \Big( \int_{X_{y}}  |U_{K}|^{2}_{h}\, c_{n} dz \wedge d\overline{z}  \Big)
 \, c_{m}  dt \wedge d\overline{t} \\
 =& \int_{Y_{0}} |u|^{2}_{\omega_{Y}, H} e^{-\psi} \,dV_{\omega_Y} 
\end{align*}
Hence, together with Claim \ref{claim2}, 
we obtain the desired $L^{2}$-estimate: 
\begin{align*}
\int_Y|u|^2_{\omega_Y,H}e^{-\psi}\,dV_{\omega_Y}\leq\int_Y\langle B^{-1}_{\omega_Y,\theta_\psi}g,g\rangle_{\omega_Y,H} e^{-\psi}\,dV_{\omega_Y}.
\end{align*}

\cite[Proposition 23.3]{HPS18} shows that 
the canonical $L^2$-metric $H$ satisfies the assumption in Lemma \ref{lemm-L2}, 
(i.e.,\,the norm $|t|_{H^{*}}$ is bounded for any section of $E^{*}$). 
This  can be proved 
without using the Griffiths $\theta$-semi-positivity (see the proof of \cite[Proposition 23.3]{HPS18}). 
Thus, even in this step,  
our proof never uses the Griffiths $\theta$-semi-positivity of $H$ (see Remark \ref{rem-Gposi}). 
By Lemma \ref{lemm-L2}, 
the $\Dbar$-equation $\Dbar u=g$ on $Y_{0}$ can be extended on $Y$, 
completing the proof.  
\end{step}
\vspace{-8mm}
\end{proof}

\begin{rem}\label{rem-Gposi}
As noted, the Griffiths $\theta$-semi-positivity of $H$ is not used in the proof of Theorem \ref{thm-main}.
More precisely, the only result from \cite{HPS18, PT18} that we use is \cite[Proposition 23.3]{HPS18} in the final step.
This result follows from the Ohsawa-Takegoshi $L^{2}$-extension theorem with a ``non-optimal'' $L^{2}$-estimate.
Due to \cite{BL16}, the Griffiths semi-positive of $H$ is equivalent to 
the Ohsawa-Takegoshi $L^{2}$-extension theorem with ``optimal'' $L^{2}$-estimate. 
In this sense, our proof does not depend on the Griffiths $\theta$-semi-positive essentially. 
\end{rem}

At the end of this section, as an application of Theorem \ref{thm-technical}
we give proofs for Theorem \ref{thm-main} and Corollary \ref{cor-vanish}. 

\begin{proof}[Proof of Theorem \ref{thm-main}]

We first remark that 
the canonical $L^2$-metric $H$ is lower semi-continuous by \cite{HPS18}. 
We emphasize that in the proof of Theorem \ref{thm-main}, 
the Griffiths semi-positivity is used only to check this lower semi-continuous. 
More precisely, we do not use the Griffiths semi-positivity
in checking that $H$ is  lower semi-continuous on $Y \setminus Z$, 
where $Z$ is a suitable Zariski closed subset (see \cite[Proposition 22.5]{HPS18}), 
but we use the Griffiths semi-positivity in extending this property to $Y$.

Let $Y_{\mathcal{E}} \subset Y$ be the maximal Zariski open subset where 
the torsion-free sheaf
$\mathcal{E}:=f_{*}(\mathcal{O}_{X}(K_{X/Y}+L) \otimes \I(h))$ is locally free. 
Take a Stein coordinate $\Omega \subset Y_{\mathcal{E}}$ 
equipped with the data as in Definition \ref{def-dbarNak}. 
If the inverse image $f^{-1}(\Omega)$ is K\"ahler, 
then the $\Dbar$-equation with optimal $L^{2}$-estimate  in Definition \ref{def-dbarNak}
can be solved by Theorem \ref{thm-technical}.
This K\"ahler condition is satisfied if $\Omega$ is sufficiently small, which proves conclusion (1). 
Also, this K\"ahler  condition is satisfied if $X$ is K\"ahler, 
which proves conclusion (2). 
\end{proof}

\begin{proof}[Proof of Corollary \ref{cor-vanish}]
Our strategy of the proof is based on the argument in \cite{Ohs84, Mat16}. 

We first fix a Stein open cover $\mathcal{U}:=\{U_{i}\}_{i\in I}$ of $Y$ 
and a partition $\{\rho_i \}_{i \in I}$ of unity associated with $\mathcal{U}$. 
Then, we consider the  diagram 
$$
\xymatrix{
H^{q}(X, K_{X}\otimes L \otimes \I(h))
\ar[r]^r \ar@{}[dr]|\circlearrowleft & H^{q}(X_{\mathcal{E}}, K_{X}\otimes L \otimes \I(h)) \\
\check{H}^{q}(\mathcal{U}, K_{Y}\otimes \mathcal{E}) \ar[u]_{\varphi}  \ar[r]^{\varphi_Y} & 
H^{q}(Y_{\mathcal{E}}, K_{Y}\otimes \mathcal{E})   \ar[u]_{\widetilde{\bullet}}, \\
}
$$
where $\mathcal{E}:=f_{*}(\mathcal{O}_{X}(K_{X/Y}+L) \otimes \I(h))$ 
and the above morphisms are defined as follows: 
The morphism $\widetilde{\bullet}$ is defined by 
the same operation $[\widetilde{g}]$ as in Step \ref{step1} of the proof of  Theorem \ref{thm-technical} 
by regarding $H^{q}(Y_{\mathcal{E}}, K_{Y}\otimes \mathcal{E}) $ as the $\Dbar$-cohomology group 
and  $r$ is the restriction map from $X$ to $X_{\mathcal{E}}:=f^{-1}(Y_{\mathcal{E}})$, 
where $Y_{\mathcal{E}} \subset Y$ is the maximal Zariski open where $\mathcal{E}$ is locally free. 
To explain the definitions of $\varphi$ and $\varphi_Y$, 
we take a $\rm{\check{C}}$ech cohomology class $[\{ \alpha_{i_{0}\cdots i_{q}} \}] 
\in \check{H}^{q}(\mathcal{U}, K_{Y}\otimes \mathcal{E}) $. 
Then, the $\Dbar$-cohomology class $\varphi_Y(\alpha)$ is defined by 
the $\Dbar$-closed $E$-valued  $(m, q)$-form $\varphi_Y(\alpha)$ 
constructed by the standard method: 
 $$
\varphi_Y(\alpha):= 
\Dbar \sum_{i_{q-1}} \rho_{i_{q-1}} \Big(
\cdots 
\Dbar \sum_{i_{1}} \rho_{i_{1}} \big(
\Dbar \sum_{i_{0}} \rho_{i_{0}} (\alpha_{i_{0}\cdots i_{q}}|_{U_{i_{0}\cdots i_{q}} \cap Y_{\mathcal{E}}})
\big)\Big), 
$$
where $\U{q}:=U_{i_{0}}\cap\cdots\cap U_{i_{q}}$ is a Stein open set. 
By the projection formula 
$$
f_*\mathcal{O}_X (K_{X}\otimes L \otimes \I(h)) \cong  K_{Y}\otimes \mathcal{E}, 
$$
we have the natural morphism
$$
H^{0}(\U{p}, f_{*}(K_{X}\otimes L \otimes \I(h) )) \xrightarrow{\quad \cong \quad}
H^{0}(f^{-1}(\U{p}), K_{X}\otimes L \otimes \I(h) ), 
$$
which corresponds to $\widetilde{\bullet}$ of the case $q=0$. 
Hence, we can regard $\alpha_{i_{0}\cdots i_{q}}$ as the section 
$$
\widetilde{\alpha}_{i_{0}\cdots i_{q}} \in H^{0}(f^{-1}(\U{p}), K_{X}\otimes L \otimes \I(h) ).$$ 
Then $\varphi(\alpha)$ is defined by the same way as  $\varphi_Y(\alpha)$
using  $\{ \widetilde{\alpha}_{i_{0}\cdots i_{q}} \}$ and $\{\widetilde{\rho}_{i} := f^{*} \rho_{i} \}_{i \in I}$. 
We can easily see that the above diagram is commutative. 
Furthermore,  the morphism $\varphi$ is injective by the same argument as in 
\cite[Subsection 3.3]{Mat16} (and \cite{Ohs84}). 

We will apply the proof of Theorem 3.2 to the case where 
$g:=\varphi_Y(\alpha)$, $\theta:=\omega_{Y}$, and $\psi=0$. 
Then, by construction and noting that $\theta_0=\omega_Y$, we can see that 
$$
\langle B^{-1}_{\omega_Y, \theta_0}\, g, g\rangle_{\omega_Y,H}
=\frac{1}{q} |g|^2_{\omega_Y,H}
$$
is locally $L^{2}$-integrable on $Y$ (not only on $Y_{\mathcal{E}}$)

Since $Y$ is a weakly pseudo-convex manifold, 
we can find an exhaustive psh $\psi$ function on $Y$ 
such that 
$\langle B^{-1}_{\omega_Y, \theta_\psi}\, g, g\rangle_{\omega_Y,H} e^{-\psi}$
is $L^{2}$-integrable on $Y$. 
Indeed, this can be achieved 
by replacing $\psi$ with $\chi \circ \psi $ and  noting that 
$ B^{-1}_{\omega_Y, \theta_\psi} \leq  B^{-1}_{\omega_Y, \theta_0}$, 
where $\chi \colon \mathbb{R} \to \mathbb{R}$ is  a rapidly increasing convex function.  
Thus, Claim \ref{claim2} shows that there exists $\widetilde{u}$ on $X_{\mathcal{E}}$ satisfying
\begin{align*}
\Dbar\widetilde{u}&=\widetilde{g}\,\,\text{on}\,\,X_{\mathcal{E}}\quad \text{and}\\
\int_{X_{\mathcal{E}}}|\widetilde{u}|^2_{\omega_\varepsilon,h}e^{-f^*\psi}\,dV_{\omega_{\varepsilon}}
&\leq\int_{Y_{\mathcal{E}}} \langle B^{-1}_{\omega_Y,\theta_\psi}g,g\rangle_{\omega_Y,H}e^{-\psi}\,dV_{\omega_Y}.
\end{align*}
The smooth function $f^{*}\psi$ is defined on $X$ (not only on $X_{\mathcal{E}}$).
Thus, the $\Dbar$-equation $\Dbar\widetilde{u}=\widetilde{g}$ can be extended to $X$, 
which implies that $\varphi(\alpha)$ is $\Dbar$-exact. 
This finishes the proof since $\varphi$ is injective and 
$\widetilde{g}$ is a representation of the $\Dbar$-cohomology $\varphi(g)$. 
 \end{proof}
 
\section{On direct image sheaves of relative pluri-canonical bundles}\label{Sec-4}

In this section, 
we consider the relative pluricanonical bundle $mK_{X/Y}+L$ 
under the situation of Theorem \ref{thm-main}, 
and establish the singular Nakano positivity of the Narasimhan-Simha metric 
on its direct image sheaf (see Corollary \ref{cor-main}).
The proof is an application of Theorem \ref{thm-main}
using the Bergman kernel metric and Narasimhan-Simha metrics developed in \cite{PT18}.

\begin{cor}\label{cor-main}
Consider the situation as in Theorem \ref{thm-main}. 
For a fixed positive integer $m \in \mathbb{Z}_+$, we consider the $m$-th Narasimhan-Simha metric $H_m$ on the direct image sheaf
$$
\mathcal{E} := f_{*}(\mathcal{O}_{X}(mK_{X/Y} + L) \otimes \mathcal{I}( B_{m}^{-\frac{m-1}{m}} \cdot h^{\frac{1}{m}})),
$$
where $B_{m}^{-1}$ denotes the $m$-th Bergman kernel metric on $mK_{X/Y}+L$. 
$($See \cite[3.1]{PT18} for the definitions of Bergman kernel metrics and Narasimhan-Simha metrics.$)$

Then, we have the following$:$
\smallskip

$(1)$ Assume $\theta$ is $d$-closed. Then, the $m$-th Narasimhan-Simha metric $H_m$ satisfies
$$
\ai\Theta_{H_m} \geq^{L^2}_{\loc \Nak} \theta \otimes \Id \text{ on } Y \text{ in the sense of Definition } \ref{def-dbarNak-sheaf}.
$$

$(2)$ If $X$ is K\"ahler and $\theta = 0$, then the $m$-th Narasimhan-Simha metric $H_m$ satisfies
$$
\ai\Theta_{H_m} \geq^{L^2}_{\glo \Nak} 0 \text{ on } Y \text{ in the sense of Definition } \ref{def-dbarNak-sheaf}.
$$

\medskip

$(3)$ Assume $\theta$ is $d$-closed and $\mathcal{I}(h|_{X_y}^{\frac{1}{m}}) =\mathcal{O}_{X_{y}}$ 
holds for a general fiber $X_{y}$ of $f \colon X \to Y$ $($which is satisfied for a sufficiently large  integer $m$$)$. Then, the natural inclusion
$$
\mathcal{E} = f_{*}(\mathcal{O}_{X}(m K_{X/Y} + L) \otimes \mathcal{I}( B_{m}^{-\frac{m-1}{m}} \cdot h^{\frac{1}{m}})) \hookrightarrow f_{*}(\mathcal{O}_{X}(mK_{X/Y} + L))
$$
is generically isomorphic, and the singular Hermitian metric $G_m$ on $f_{*}(\mathcal{O}_{X}(mK_{X/Y} +L) )$ induced by this inclusion and $H_m$ satisfies
$$
\ai\Theta_{G_m} \geq^{L^2}_{\loc \Nak} \theta \otimes \Id \text{ on } Y \text{ in the sense of Definition } \ref{def-dbarNak-sheaf}.
$$

\end{cor}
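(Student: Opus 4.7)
The plan is to reduce the corollary to Theorem \ref{thm-main} by rewriting the $m$-th relative pluri-canonical bundle as
$$mK_{X/Y}+L = K_{X/Y}+L', \qquad L' := (m-1)K_{X/Y}+L,$$
and equipping $L'$ with the singular Hermitian metric
$$h' := B_m^{-\frac{m-1}{m}} \cdot h^{\frac{1}{m}},$$
which is well-defined because $B_m^{-\frac{m-1}{m}}$ is a singular metric on $\frac{m-1}{m}(mK_{X/Y}+L)$ and $h^{\frac{1}{m}}$ is a singular metric on $\frac{1}{m}L$, whose tensor product lives on $L'$. By construction, $\mathcal{I}(h')$ coincides with the multiplier ideal appearing in the statement, so that $\mathcal{E} = f_*(\mathcal{O}_X(K_{X/Y}+L')\otimes \mathcal{I}(h'))$.

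Next I would verify the curvature hypothesis for $h'$. By the theorem on the plurisubharmonic variation of Bergman kernels established in \cite{PT18, HPS18}, the hypothesis $\sqrt{-1}\Theta_h \geq f^*\theta$ with $\theta$ $d$-closed implies that the $m$-th Bergman kernel metric $B_m^{-1}$ on $mK_{X/Y}+L$ itself satisfies $\sqrt{-1}\Theta_{B_m^{-1}} \geq f^*\theta$ in the sense of currents. Combining this with the hypothesis on $h$ yields
$$\sqrt{-1}\Theta_{h'} = \frac{m-1}{m}\sqrt{-1}\Theta_{B_m^{-1}} + \frac{1}{m}\sqrt{-1}\Theta_h \geq f^*\theta.$$
I would then apply Theorem \ref{thm-main} to the pair $(L', h')$ to conclude that the canonical $L^2$-metric $H$ on $\mathcal{E} = f_*(\mathcal{O}_X(K_{X/Y}+L')\otimes \mathcal{I}(h'))$ satisfies local Nakano positivity in case (1), and global Nakano positivity in case (2), where the K\"ahler assumption on $X$ together with $\theta=0$ permits the use of Theorem \ref{thm-main}(2). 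By the construction in \cite[Section 3]{PT18}, the canonical $L^2$-metric $H$ on $\mathcal{E}$ coincides with the $m$-th Narasimhan-Simha metric $H_m$, which then yields (1) and (2).

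For part (3), the assumption $\mathcal{I}(h|_{X_y}^{\frac{1}{m}}) = \mathcal{O}_{X_y}$ on a general fiber, combined with a H\"older-type estimate bounding the Bergman kernel $B_m|_{X_y}$ from below in terms of $h|_{X_y}$, implies that $\mathcal{I}(h'|_{X_y}) = \mathcal{O}_{X_y}$ on a general fiber. Hence the natural inclusion
$$\mathcal{E} \hookrightarrow f_*(\mathcal{O}_X(mK_{X/Y}+L))$$
is an isomorphism over a Zariski open subset $Y^\circ \subset Y$, so that the induced metric $G_m$ on the larger sheaf agrees with $H_m$ on $Y^\circ$. Since Definition \ref{def-dbarNak-sheaf} only probes the locally free locus of the underlying torsion-free sheaf, the local Nakano positivity of $G_m$ then follows from (1).

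The main obstacle will be the precise identification of the canonical $L^2$-metric on $\mathcal{E}$ with the Narasimhan-Simha metric $H_m$ of \cite{PT18}, together with the accompanying check that the curvature of $B_m^{-1}$ is controlled by $f^*\theta$ under the weak hypothesis $\sqrt{-1}\Theta_h \geq f^*\theta$; both points require a careful invocation of the plurisubharmonic variation theorem in the setting of a non-smooth K\"ahler fibration with a singular line bundle metric, rather than any genuinely new analytic input.
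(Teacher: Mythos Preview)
Your approach to (1) and (2) via the decomposition $mK_{X/Y}+L = K_{X/Y}+L'$ with $L'=(m-1)K_{X/Y}+L$ and $h'=B_m^{-(m-1)/m}h^{1/m}$ is the paper's. One minor difference in (1): rather than asserting $\sqrt{-1}\Theta_{B_m^{-1}} \geq f^*\theta$ directly (which is not stated in \cite{PT18} for $\theta \neq 0$), the paper writes $\theta = \deldel\varphi$ locally, replaces $h$ by $he^{f^*\varphi}$ so that the curvature hypothesis becomes $\geq 0$, applies the $\theta=0$ argument, and then observes $H_m' = H_m e^{\varphi}$ to recover the $\theta$-lower bound. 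Your direct route is fine, but the justification you flag as an ``obstacle'' amounts to exactly this local-potential reduction.

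Your argument for (3) has a genuine gap. Definition \ref{def-dbarNak-sheaf} tests $(\mathcal{Q},G_m)$ on the locally free locus $Y_\mathcal{Q}$ of $\mathcal{Q} := f_*(\mathcal{O}_X(mK_{X/Y}+L))$, and that locus can be \emph{strictly larger} than the Zariski open set $Y^\circ$ on which the inclusion $\mathcal{E} \hookrightarrow \mathcal{Q}$ is an isomorphism. So knowing $G_m = H_m$ on $Y^\circ$ is not enough: for a point $y \in Y$ and a Stein coordinate $\Omega \subset U \cap Y_\mathcal{Q}$ near it, the datum $g$ is $\mathcal{Q}$-valued on all of $\Omega$, whereas (1) only lets you solve for $\mathcal{E}$-valued forms on a Stein set of the form $\Omega \setminus H$, with $H$ a hypersurface containing the locus where the inclusion fails or either sheaf is not locally free. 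The paper closes this by solving on $\Omega \setminus H$ via (1) and then extending the equation $\overline{\partial} u = g$ across $H$ using Lemma \ref{lemm-L2}; the hypothesis of that lemma (local boundedness of $|t|_{G_m^*}$ for smooth sections $t$ of $\mathcal{Q}^*$) is supplied precisely by the Griffiths $\theta$-positivity of $G_m$ from \cite{PT18, HPS18}. Without this extension step your conclusion does not follow, and the appeal to Griffiths positivity here is not decorative but essential.
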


\begin{proof}
We will first prove conclusion (2). Consider the following decomposition: 
\begin{equation}\label{decom}
mK_{X/Y} + L = K_{X/Y} + \frac{m-1}{m} (mK_{X/Y}+L) + \frac{1}{m}L,
\end{equation}
and assign the singular Hermitian metrics $B_{m}^{-1}$ and $h$ to $(mK_{X/Y}+L)$ and $L$, respectively. 
Here, $B_{m}^{-1}$ denotes the $m$-th Bergman metric on $mK_{X/Y}+L$ defined by $(L,h)$.
Since the curvature of $B_{m}^{-1}$ is semi-positive by \cite[3.1]{PT18}, 
the curvature of $B_{m}^{-(m-1)/m} \cdot h^{1/m}$ is also semi-positive.
Thus, conclusion (2) follows directly from Theorem \ref{thm-main}.

We now prove conclusion (1) using the argument outlined above. 
Take a locally defined smooth function $\varphi$ such that $\theta = \deldel \varphi$ 
and consider the singular Hermitian metric $h' := h e^{f^{*}\varphi}$ on $L$. 
Note that $\sqrt{-1}\Theta_{h'} \geq 0$ holds. 
Consider the $m$-th Narasimhan-Simha metric $H_m'$ defined by ${B'}_{m}^{-1}$ and $h'$ as in the firts part, 
where ${B'}_{m}^{-1}$ is defined by $(L, h')$. 
The first part shows that $H_m'$ satisfies
\[
\ai\Theta_{H_m'} \geq^{L^2}_{\loc \Nak} 0 \text{ on } Y \text{ in the sense of Definition } \ref{def-dbarNak-sheaf}.
\]
On the other hand, by the definitions of ${B'}_{m}^{-1}$ and $h'$, 
we can see that 
\[
{B'}_{m}^{-\frac{m-1}{m}} \cdot {h'}^{\frac{1}{m}} = {B}_{m}^{-\frac{m-1}{m}} \cdot {h}^{\frac{1}{m}} e^{f^{*}\varphi}.
\]
This shows that $H_m' = H_m e^{f^{*}\varphi}$, completing the proof of conclusion (1).

We finally prove conclusion (3). 
Given the assumption about $\mathcal{I}(h^{\frac{1}{m}}|_{X_y})$, 
the natural inclusion 
$$
\mathcal{E} = f_{*}(\mathcal{O}_{X}(m K_{X/Y} + L) \otimes \mathcal{I}( B_{m}^{-\frac{m-1}{m}} \cdot h^{\frac{1}{m}})) \hookrightarrow f_{*}(\mathcal{O}_{X}(mK_{X/Y} + L)) =:\mathcal{Q} 
$$
is isomorphic over some Zariski open subset (see \cite[5.1]{PT18}).
The results in \cite{PT18, HPS18} show that the induced singular Hermitian metric $G_m$ on $\mathcal{Q}$ is Griffiths $\theta$-positive. 
In particular, the metric $G_m$ satisfies the assumption of Lemma \ref{lemm-L2} 
(i.e., the norm $|t|_{G_m^{*}}$ is locally bounded for every smooth section $t$ of $\mathcal{Q}^*$). 

We will check that the restriction of $(\mathcal{Q}, G_m)$ to $Y_\mathcal{Q}$  
satisfies Definition \ref{def-dbarNak} (see Definition \ref{def-dbarNak-sheaf}). 
Let $U$ be a sufficiently small open ball centered at a point $y \in Y$ 
and $\Omega \subset U \cap Y_\mathcal{Q}$ be a Stein coordinate with data $\psi$, $q$, and $g$ as in Definition \ref{def-dbarNak}.   
Let $Z \subset Y$ be a closed analytic subset such that 
the above inclusion is an isomorphism and both $\mathcal{E}$ and $\mathcal{Q}$ are locally free 
on $Y \setminus Z$. 
Consider a hypersurface $H$ such that $\Omega \setminus H$ is Stein and $Z \subset H$. 
Note that $g$ (which is a $\mathcal{Q}$-valued form) can be identified with 
a $\mathcal{E}$-valued form $g'$ on $\Omega \setminus Z$. 
Under this identification, by conclusion (1), we can find a $\mathcal{E}$-valued form $u'$  
satisfying $\Dbar g' = u'$ on $\Omega \setminus H$ and satisfying the $L^2$-estimate in Definition \ref{def-dbarNak}. 
Note that the $L^2$-estimate is the same whether considered over $\Omega$ or $\Omega \setminus H$ 
since $H$ is of Lebesgue measure zero. 
Then, using the identification of $\mathcal{E}$ and $\mathcal{Q}$ on $\Omega \setminus H$ again,
the solution $u'$ can be considered a $\mathcal{Q}$-valued form $u$ on $\Omega \setminus Z$
satisfying $\Dbar g = u$ on $\Omega \setminus H$. 
Lemma \ref{lemm-L2} extends the equation $\Dbar g = u$ on $\Omega \setminus H$ to $\Omega$, 
thereby completing the proof.
\end{proof}

\appendix
\section{}\label{appendix}

In this section, we summarize a standard approximation method for almost everywhere (a.e.) Griffiths semi-negative singular Hermitian metrics, as described in \cite[Definition 2.2.2]{PT18}. This result can be seen as a generalization of Berndtsson-Paun's regularization theorem for singular Hermitian metrics with Griffiths semi-negative curvature. As an application of this result, we obtain an extension theorem for Griffiths semi-negative singular Hermitian metrics. While these results may be familiar to experts, we provide detailed explanations for the convenience of readers.

%As a byproduct of the main research, we establish a standard approximation method for a.e. Griffiths semi-negative singular Hermitian metrics (see \cite[Definition\,2.2.2]{PT18}).
%This result can be seen as a generalization of Berndtsson--Paun's regularization theorem for singular Hermitian metrics with Griffiths semi-negative curvature. 
%As an application of this result, we obtain an extension theorem for Griffiths semi-negative singular Hermitian metrics, which may be known for some experts, but our approach is based on a different perspective than before. 
%For readers who are interested in these topics, we will note them down. 

\begin{lemm}[{cf.\,\cite[Proposition\,3.1]{BP08},\, \cite[Remark 2.2.3]{PT18},\, \cite[Proof of Theorem\,5.10]{Wat24}}]\label{lem-aeGrif}
    Let $h$ be an a.e. Griffiths semi-negative singular Hermitian metric on the trivial bundle $U\times \C^r$ on a domain $U\subset \C^n$ and $h_{i\bar{j}}$ denote $h(e_i, e_j)$, 
    where $(e_1,\ldots,e_r)$ is the standard frame of $U\times \C^r$. 
    For a standard approximate identity $\rho_\e$, we define a sequence of smooth Hermitian metrics $\{ h_\e\}_{\e>0}$ by the convolution 
    \[
    h_{\e,i\bar{j}}=h_{i\bar{j}}* \rho_e. 
    \]
    Then, the following hold: 
    \begin{enumerate}
        \item $h_\e$ is a Griffiths semi-negative smooth Hermitian metric defined on $U_\e=\{ x\in U \mid \mathbb{B}(x;\e)\subset U\}$; 
        \item there exists a Griffiths semi-negative singular Hermitian metric $H$ on U such that $h_\e \searrow H$ pointwise as $\e \searrow 0$; 
        \item $H=h$ almost everywhere. 
    \end{enumerate}
\end{lemm}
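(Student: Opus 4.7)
The overall strategy follows Berndtsson--Paun's regularization-by-convolution argument, reducing everything to the scalar theory of plurisubharmonic functions via the characterization of Griffiths semi-negativity in terms of quadratic forms. The key observation is that for every constant vector $v = (v_1,\ldots,v_r) \in \C^r$, the function
\[
\phi_v(x) := |v|^2_{h(x)} = \sum_{i,j} h_{i\bar j}(x) v_i \overline{v}_j
\]
agrees almost everywhere with a plurisubharmonic function on $U$, by the a.e.\,Griffiths semi-negativity assumption (cf.\,\cite[Definition 2.2.2]{PT18}). Since convolution commutes with the linear combination defining $\phi_v$, we obtain the crucial identity $|v|^2_{h_\varepsilon} = \phi_v * \rho_\varepsilon$ on $U_\varepsilon$. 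All three conclusions then follow from well-known facts about mollifications of psh functions.

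First I would establish (1). Since $\phi_v$ is (a.e.\,equal to) a psh function on $U$, the convolution $\phi_v * \rho_\varepsilon$ is smooth and plurisubharmonic on $U_\varepsilon$. As this holds for every $v \in \C^r$, the smooth Hermitian matrix $h_\varepsilon(x)$ is Griffiths semi-negative on $U_\varepsilon$ (recall that a smooth metric on a trivial bundle is Griffiths semi-negative iff $|v|^2_h$ is psh for every constant section $v$).

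For (2), I would invoke the standard monotonicity of mollifications of psh functions: for $\phi$ plurisubharmonic and $\rho_\varepsilon$ a standard radial mollifier, $\phi * \rho_\varepsilon$ decreases pointwise as $\varepsilon \searrow 0$ and converges to $\phi$ (identified with its upper semi-continuous representative). Applied to each $\phi_v$, this yields a pointwise monotone decreasing limit of $|v|^2_{h_\varepsilon}$ for every $v$. By the polarization identity, each matrix entry $h_{\varepsilon, i\bar j}(x)$ is a finite $\C$-linear combination of $|v|^2_{h_\varepsilon}(x)$ for four standard choices of $v$; hence the entrywise pointwise limits $H_{i\bar j}(x) := \lim_{\varepsilon \to 0} h_{\varepsilon, i\bar j}(x)$ exist at every $x \in U$. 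The resulting matrix $H(x)$ is Hermitian positive semi-definite, and $|v|^2_H = \lim_{\varepsilon \to 0} |v|^2_{h_\varepsilon}$ is a decreasing limit of smooth psh functions, hence psh, so $H$ is a Griffiths semi-negative singular Hermitian metric. The matrix monotonicity $h_\varepsilon \searrow H$ follows because $h_{\varepsilon_1} - h_{\varepsilon_2}$ is positive semi-definite whenever $\varepsilon_1 \leq \varepsilon_2$, as all its associated quadratic forms are non-negative.

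For (3), standard mollifier theory gives $h_{\varepsilon, i\bar j} \to h_{i\bar j}$ in $L^1_{\mathrm{loc}}(U)$, hence pointwise almost everywhere along a subsequence. Combined with the pointwise everywhere convergence $h_\varepsilon \to H$ from (2), this forces $H = h$ almost everywhere. The main subtle point, though ultimately routine, is that the hypothesis provides psh-ness of the quadratic forms $\phi_v$ rather than of the individual entries $h_{i\bar j}$ (which are typically not even real-valued); thus one cannot mollify entries as psh functions and argue directly, but must go through $\phi_v$ and use polarization to recover the matrix. With that caveat understood, the argument is a clean reduction to the scalar mollification theorem.
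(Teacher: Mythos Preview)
Your argument for the matrix monotonicity $h_\varepsilon\searrow H$ and for (3) is fine, but there is a genuine gap in your proof of Griffiths semi-negativity in (1), and the same gap propagates to (2). The characterization you invoke---that a smooth metric on a trivial bundle is Griffiths semi-negative iff $|v|^2_h$ is plurisubharmonic for every \emph{constant} section $v$---is false. A rank-one counterexample: take $h(z)=2+\mathrm{Re}(z^2)$ on a small disc in $\C$. Then $h$ is pluriharmonic, so $|v|^2_h=|v|^2\,h$ is psh for every constant $v$; yet $\sqrt{-1}\partial\bar\partial\log h=-(|z|^2/h^2)\,\sqrt{-1}dz\wedge d\bar z\leq 0$, so $\log h$ is not psh and $h$ is not Griffiths semi-negative (one can also check directly that $|u|^2_h$ fails to be psh for $u(z)=z-3$). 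The correct characterization requires $|u|^2_h$ to be psh for every local \emph{holomorphic} section $u$, and for non-constant $u$ the identity $|u|^2_{h_\varepsilon}=(|u|^2_h)*\rho_\varepsilon$ on which your reduction rests simply fails, since $u$ depends on the base point.

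The paper repairs this with a translation/Fubini argument. Writing $h_\varepsilon(z)=\int h(z-w)\rho_\varepsilon(w)\,dV_w$ as an average of the translated metrics $h^w(z):=h(z-w)$, each of which is again a.e.\ Griffiths semi-negative, one obtains for every holomorphic $u$, every direction $\xi$, and every nonnegative test function $\phi$
\[
\int_V \Delta_\xi\bigl(|u|^2_{h_\varepsilon}\bigr)\,\phi
=\int\rho_\varepsilon(w)\Bigl(\int_V |u|^2_{h^w}\,\Delta_\xi\phi\Bigr)\,dV_w\;\geq\;0,
\]
the inner integral being nonnegative by plurisubharmonicity of $|u|^2_{h^w}$. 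This gives (1) for arbitrary holomorphic $u$. Once that is in hand, your matrix monotonicity yields $|u|^2_{h_\varepsilon}\searrow |u|^2_H$ for \emph{all} sections $u$, so $|u|^2_H$ (equivalently $\log|u|^2_H$) is a decreasing limit of psh functions for every holomorphic $u$, and $H$ is Griffiths semi-negative; this is how (2) is completed.
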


\begin{proof}
    For any constant section $c$, we get $|c|^2_h*\rho_\e=|c|^2_{h_\e}$ and $\deldel |c|^2_h\geq 0$, which means that $h_\e$ is decreasing as $\e\searrow 0$. To prove $h_\e$ to be Griffiths semi-negative, we take an arbitrary open subset $V\Subset U_\e$ and a local holomorphic section $u=(u_1,\ldots,u_r)$ on $V$. 
    It is enough to show that $\Delta_\xi(|u|^2_{h_\e})\geq0$ for any $\xi=(\xi_1,\ldots,\xi_n)\in \C^n$, where $\Delta_\xi=\sum \frac{\partial^2}{\partial z_j\partial \bar{z}_k}\xi_j\bar{\xi}_k$. 
    For any test function $\phi\in\cal{D}(V)_{\geq0}$, by Fubini's theorem we have
    \begin{align*}
        \int_V\Delta_\xi(|u|^2_{h_\e})\phi&=\int_V|u|^2_{h_\e}(z)\Delta_\xi(\phi)(z)dV_z\\
        %&=\int_V \sum_{i,j} h_{\varepsilon,i\bar{j}}(z)u_i(z)\overline{u}_j(z)\Delta_\xi(\phi)(z)dV_z\\
        &=\int_V \sum_{i,j}\biggl(\int_{\mathrm{supp}\,\rho_\varepsilon} h_{\varepsilon,i\bar{j}}(z-w)\rho_\varepsilon(w)dV_w\biggr)u_i(z)\overline{u}_j(z)\Delta_\xi(\phi)(z)dV_z\\
        %&=\int_{\mathrm{supp}\,\rho_\varepsilon} \sum_{i,j}\biggl(\int_V h_{\varepsilon,i\bar{j}}(z-w)u_i(z)\overline{u}_j(z)\Delta_\xi(\phi)(z)dV_z\biggr)\rho_\varepsilon(w)dV_w\\
        &=\int_{\mathrm{supp}\,\rho_\varepsilon}\biggl(\int_V |u|^2_{h^w}(z)\Delta_\xi(\phi)(z)dV_z\biggr)\rho_\varepsilon(w)dV_w\\
        &\geq0,
    \end{align*}
    where $h^w(z):=h(z-w)$. Since it is only a parallel transport, \( h^w \) is also a.e. Griffiths semi-negative, i.e. $\Delta_\xi(|u|^2_{h^w})\phi\geq0$.

    Define the singular Hermitian metric $H$ by $H(z):=\lim_{\varepsilon\to+\infty}h_\varepsilon(z)$. %, i.e. $h^*_G$ is the limit of convergence pointwise.
    In detail, $H$ is defined as follows. Let $H(z):=(H_{i\bar{j}}(z))$ be the local matrix representation for $(e_1,\ldots,e_r)$. 
    For constant sections $v_1={}^t(1,0,\cdots,0),\cdots,v_r$ $={}^t(0,\cdots,0,1)$, define $H_{jj}$ as the decreasing limit of $|v_j|^2_{h_\varepsilon}$ converging pointwise, i.e. $H_{jj}(z):=\lim_{\varepsilon\to 0}|v_j|^2_{h_\varepsilon}(z)$, then $H_{jj}$ is a function with plurisubharmonicity and coincides with $h_{jj}$ a.e. % almost everywhere.
    For constant vectors $v={}^t(1,1,\cdots,0)$ and $v'={}^t(1,i,\cdots,0)$, we obtain 
    \begin{align*}
        \lim_{\varepsilon\to+\infty}|v|^2_{H_\varepsilon}&:=|v|^2_{H}=H_{11}+H_{22}+2\mathrm{Re}H_{12},\\ 
        \lim_{\varepsilon\to+\infty}|v'|^2_{H_\varepsilon}&:=|v'|^2_{H}=H_{11}+H_{22}+2\mathrm{Im}H_{12}.
    \end{align*} 
    From this, $H_{12}$ can be defined as a function and coincides with $h_{12}$ a.e. By finitely repeating this process, we can define $H$ that coincides with $h$ a.e.
    Since $h_\e$ is decreasing as $\e \searrow 0$, it follows that $\log |u|^2_H= \lim_{\varepsilon\to 0}\log|u|^2_{h_\varepsilon}$ is plurisubharmonic for any local holomorphic section $u$, which means that $H$ is Griffiths semi-negative. 
\end{proof}

This lemma naturally leads to the following result.

\begin{lemm}\label{lem-aeGrifglobal}
    Let $X$ be a complex manifold, $E\to X$ be a holomorphic vector bundle on $X$ and $h$ be an a.e. Griffiths semi-negative singular Hermitian metric on $E$. Then, there exists a Griffiths semi-negative singular Hermitian metric $H$ on $E$ satisfying $H=h$ almost everywhere.
\end{lemm}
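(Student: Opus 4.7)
The plan is to reduce to the local statement of Lemma~\ref{lem-aeGrif} and then glue the resulting local metrics, using uniqueness of plurisubharmonic representatives to ensure consistency on overlaps. First I would choose a locally finite open cover $\{U_\alpha\}$ of $X$ by coordinate charts, each biholomorphic to a domain in $\C^n$, over which $E$ admits a holomorphic trivialization $E|_{U_\alpha} \cong U_\alpha \times \C^r$. The restriction $h|_{U_\alpha}$ then becomes an a.e.\ Griffiths semi-negative singular Hermitian metric on a trivial bundle over a domain in $\C^n$. Applying Lemma~\ref{lem-aeGrif}, I obtain a Griffiths semi-negative singular Hermitian metric $H_\alpha$ defined on all of $U_\alpha$ (as the decreasing pointwise limit of the convolutions) with $H_\alpha = h$ almost everywhere on $U_\alpha$.

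Next I would verify compatibility on overlaps $U_\alpha \cap U_\beta$. Both $H_\alpha$ and $H_\beta$ are Griffiths semi-negative and coincide with $h$ almost everywhere there. For any local holomorphic section $u$ of $E$ over an open set in $U_\alpha \cap U_\beta$, the two functions $\log |u|^2_{H_\alpha}$ and $\log |u|^2_{H_\beta}$ are plurisubharmonic and agree almost everywhere. Since a plurisubharmonic function equals its upper semi-continuous envelope and is therefore determined by its almost-everywhere values, the two functions coincide pointwise. Polarizing over a spanning family of local holomorphic sections (for instance the standard basis sections of a shared refinement of the trivializations, together with suitable linear combinations) then yields $H_\alpha = H_\beta$ on $U_\alpha \cap U_\beta$. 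Consequently, the $\{H_\alpha\}$ assemble into a well-defined singular Hermitian metric $H$ on $E$, and Griffiths semi-negativity is a local property already verified on each chart; the a.e.\ equality $H = h$ is inherited from each chart as well.

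The main obstacle I anticipate is justifying that two Griffiths semi-negative singular Hermitian metrics agreeing almost everywhere must coincide pointwise. Once this uniqueness is established through the plurisubharmonic characterization of $|u|^2_H$ for local holomorphic sections $u$, the gluing step reduces to the standard sheaf-theoretic patching and the rest of the argument is routine.
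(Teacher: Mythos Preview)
Your proposal is correct and follows essentially the same approach as the paper: apply Lemma~\ref{lem-aeGrif} on trivializing charts and glue. Your treatment of the overlap compatibility is actually more explicit than the paper's---the paper simply notes that the transition relation ${}^t\overline{g_{\alpha\beta}}H_\alpha g_{\alpha\beta}=H_\beta$ holds a.e.\ and declares this defines a global metric, whereas you spell out the uniqueness via the plurisubharmonic characterization and polarization; both arrive at the same place.
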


\begin{proof}
    Take a sufficiently fine open covering $\{ U_\alpha\}_{\alpha\in \Lambda}$. 
    Then we regard $h$ as a collection of singular Hermitian metrics $\{ h_\alpha\}_{\alpha\in \Lambda}$ defined on $U_\alpha$ satisfying ${}^t\overline{g_{\alpha\beta}}h_\alpha g_{\alpha\beta}=h_\beta$ a.e. on $U_\alpha\cap U_\beta$, where each $h_\alpha$ is a.e. Griffiths semi-negative and $\{g_{\alpha\beta}\}_{\alpha,\beta\in\Lambda}$ is a collection of transition functions of $E$.
    By using Lemma \ref{lem-aeGrif}, we can obtain a Griffiths semi-negative singular Hermitian metric $H_\alpha$ on $U_\alpha$. Since $\{ H_\alpha\}_{\alpha\in \Lambda}$ also satisfies  ${}^t\overline{g_{\alpha\beta}}H_\alpha g_{\alpha\beta}=H_\beta$ a.e. on $U_\alpha\cap U_\beta$, the collection $\{ H_\alpha\}_{\alpha\in \Lambda}$ defines a singular Hermitian metric $H$ on $E$. 
    Since each $g_{\alpha\beta}$ is holomorphic, the function $\log|s_\beta|^2_H=\log|s_\beta|^2_{H_\beta}=\log|g_{\alpha\beta}s_\beta|^2_{H_\alpha}$ is plurisubharmonic on $U_\alpha\cap U_\beta$ for any $s_\beta\in H^0(U_\beta,E)$, where $g_{\alpha\beta}s_\beta\in H^0(U_\alpha\cap U_\beta,E)$. Then \( H \) is globally Griffiths semi-negative.
\end{proof}

At the end of this section, we show the aforementioned extension result, which demonstrates that Griffiths semi-negative singular Hermitian metrics behave like plurisubharmonic functions.

\begin{lemm}\label{lemm-Grifkakucho}
    Let $U\subset \C^n$ be a domain, $Z\subsetneq U$ be a $($closed$)$ analytic subset 
    and $E$ be a vector bundle on $U$. 
    Let $h$ be a Griffiths semi-negative singular Hermitian metric defined on $E|_{U\setminus Z}$. 
    Assume one of the following conditions: 
    \begin{enumerate}
        \item there exists an upper semi-continuous singular Hermitian metric $h'$ on E satisfying $h'|_{U\setminus Z}=h$; 
        \item $\mathrm{codim}\, Z \geq 2$. 
    \end{enumerate}
    Then, there exists a Griffiths semi-negative singular Hermitian metric $H$ on $E$ such that $H|_{U\setminus Z}=h$. 
\end{lemm}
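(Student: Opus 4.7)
The plan is to extend $h$ across the Lebesgue-null set $Z$ to a measurable metric defined almost everywhere on $U$, apply Lemma \ref{lem-aeGrifglobal} to convert it into a pointwise Griffiths semi-negative singular Hermitian metric $H$ on $U$, and then upgrade the almost-everywhere equality $H = h$ to a pointwise equality on $U \setminus Z$ using the uniqueness of plurisubharmonic functions.

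Since $U \subset \C^n$ is a domain, hence Stein, the bundle $E$ is holomorphically trivial on $U$; I fix a trivialization $E \cong U \times \C^r$ with associated matrix representation $(h_{i\bar j})$. Extend $h$ across $Z$ in an arbitrary measurable way (for instance, by the identity matrix on $Z$) to obtain a measurable Hermitian form $\tilde h$ defined almost everywhere on $U$. Because $Z$ has Lebesgue measure zero, $\tilde h$ is a.e.\ Griffiths semi-negative in the sense of Lemma \ref{lem-aeGrif}. In order to invoke Lemma \ref{lem-aeGrifglobal}, I must check that the entries $\tilde h_{i\bar j}$ are locally integrable on $U$, and this is precisely where the hypotheses (1) and (2) enter.

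In case (1), the upper semi-continuity of $h'$ on $U$ gives local upper bounds $h_{i\bar i} = |e_i|^2_h \leq |e_i|^2_{h'}$ on $U \setminus Z$, and the Cauchy--Schwarz estimate $|h_{i\bar j}|^2 \leq h_{i\bar i}\, h_{j\bar j}$ then yields local boundedness of every matrix entry near $Z$. In case (2), each diagonal entry $|e_i|^2_h$ is plurisubharmonic on $U \setminus Z$, and Shiffman's classical extension theorem for plurisubharmonic functions across analytic sets of complex codimension $\geq 2$ produces a plurisubharmonic extension to $U$ that is in particular locally bounded above; Cauchy--Schwarz again delivers local boundedness (hence integrability) of all entries. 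With this integrability secured, Lemma \ref{lem-aeGrifglobal} produces a pointwise Griffiths semi-negative singular Hermitian metric $H$ on $E$ with $H = \tilde h$ almost everywhere on $U$, and thus $H = h$ almost everywhere on $U \setminus Z$.

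Finally, I upgrade this to a pointwise identity on $U \setminus Z$. For every constant section $v \in \C^r$, both $|v|^2_H$ (psh on $U$) and $|v|^2_h$ (psh on $U \setminus Z$) are plurisubharmonic on $U \setminus Z$ and coincide almost everywhere there. Two plurisubharmonic functions on an open set that agree almost everywhere must coincide pointwise, since each is recovered as the decreasing limit of its ball averages, which depend only on a.e.\ values; hence $|v|^2_H = |v|^2_h$ pointwise on $U \setminus Z$ for every $v$, and polarization in $v$ yields $H = h$ on $U \setminus Z$ as singular Hermitian metrics. The main obstacle in this argument is the integrability step in case (2), which rests on Shiffman's extension theorem; once this classical input is in place, the remainder of the proof is a bookkeeping combination of Lemma \ref{lem-aeGrifglobal} with the uniqueness-of-psh principle.
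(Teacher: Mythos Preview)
Your argument is correct and follows essentially the same route as the paper: trivially extend $h$ across the null set $Z$, apply the regularization Lemma \ref{lem-aeGrif}/\ref{lem-aeGrifglobal} to obtain a Griffiths semi-negative $H$ with $H=h$ a.e., and then use the a.e.\ uniqueness of plurisubharmonic functions to conclude $H=h$ pointwise on $U\setminus Z$. Your treatment is in fact more explicit than the paper's three-line proof in one respect: you spell out that hypotheses (1) and (2) are needed to guarantee local integrability of the matrix entries so that the convolution in Lemma \ref{lem-aeGrif} is well-defined, whereas the paper absorbs this into the phrase ``thanks to the assumption.''
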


\begin{proof}
Regarding $h$ as a metric on $E$ by the trivial extension, we can say that $h$ is a.e. Griffiths semi-negative thanks to the assumption. 
Then, Lemma \ref{lem-aeGrif} implies that there exists a Griffiths semi-negative singular Hermitian metric $H$ on $E$ such that $H=h$ almost everywhere. 
Since both $H$ and $h$ are Griffiths semi-negative on $U\setminus Z$, it follows that $H=h$ on $U\setminus Z$. 
\end{proof}

\end{document}